\numberwithin{equation}{section}
\newtheorem{thm}{Theorem}[section]
\newtheorem{pr}{Proposition}[section]
\newtheorem{lem}{Lemma}[section]
\newtheorem{cor}{Corollary}[section]
\newtheorem{df}{Definition}
\theoremstyle{definition}
\newtheorem{rem}{Remark}
\newcommand{\Nbb}{\mathbb{N}}
\newcommand{\Zbb}{\mathbb{Z}}
\newcommand{\lip}{\mathrm{Lip}}
\newcommand{\al}{\alpha}
\newcommand{\Rbb}{\mathbb R}
\newcommand{\om}{\omega}
\newcommand{\omt}{\widetilde{\omega}}
\newcommand{\bmo}{\mathrm{BMO}}
\newcommand{\QQ}{\mathfrak{Q}}
\begin{document}

\author{Andrei V.\ Vasin}
\address{Admiral Makarov State University of Maritime and Inland Shipping,
Dwinskaya Street 5/7, St.~Petersburg 198255, Russia}
\email{andrejvasin@gmail.com}

\author{Evgueni Doubtsov}
\address{St.~Petersburg Department of V.A.~Steklov Mathematical Institute,
Fontanka 27, St.~Petersburg 191023, Russia}
\email{dubtsov@pdmi.ras.ru}

\title{A T(P) theorem for Zygmund spaces on domains}

\begin{abstract}
Let $D\subset \mathbb{R}^d$ be a bounded Lipschitz domain,
$\omega$ be a high order modulus of continuity and let $T$ be a convolution Calder\'{o}n--Zygmund operator.
We characterize the bounded restricted operators
$T_D$ on the Zygmund space $\mathcal{C}_{\omega}(D)$.
The characterization is based on properties of $T_D P$
for appropriate polynomials $P$ restricted to $D$.
\end{abstract}

\thanks{This research was supported by the Russian  Foundation for Basic Research (grant No.~20-01-00209).}

\maketitle

\section{Introduction}

\subsection{Basic definitions}

\subsubsection{Restricted Calder\'{o}n--Zygmund operators}
A $C^{k}$-smooth  homogeneous Calder\'{o}n--Zy\-gmund
operator is a principal value convolution operator
\[
Tf(y)= PV \int_{\mathbb{R}^d} f(x) K(y-x)\, dx,
\]
where $dx$ denotes  Lebesgue measure in $\mathbb{R}^d$ and
\[
K(x) =\frac{\Omega(x)}{|x|^d },\quad x \neq 0;
\]
it is assumed that $\Omega(x)$ is a homogeneous function of degree $0$
and $\Omega(x)$ is $C^k$-differentiable on $\mathbb{R}^d \setminus \{0\}$
with zero integral on the unit sphere.
The function $K(x)$ is called a Calder\'{o}n--Zygmund kernel.

Given a domain $D \subset\mathbb{R}^d$,
we consider the corresponding modification of $T$.
Namely, the operator $T_D$ defined by the formula
\[
T_Df=  (Tf)\chi_D,\quad \mathrm{supp} f\subset \overline{D},
\]
is called  a \textit{restricted} Calder\'{o}n--Zygmund operator.

In the present paper, we study certain smoothness properties of $T_D$
for a domain $D$ with regular boundary.

\subsubsection{Lipschitz domains}

\begin{df}\label{df3}
A bounded domain $D\subset  \mathbb{R}^d$
is called $(\delta, R)$-Lipschitz if,
for every point
 $a\in\partial D$, there exists a function  $A:\mathbb{R}^{d-1}\rightarrow \mathbb{R}$
with $\|\nabla A\|_\infty\leq \delta$, and
there exists a cube  $\QQ\subset\mathbb{R}^d$
with side length $R$ and center $a$ such that
the equality
\[
D\cap \QQ= \left\{ (x,y)\in (\mathbb{R}^{d-1}, \mathbb{R})\cap \QQ:\ y>A(x) \right\}
\]
holds after a suitable shift and rotation of the coordinate system.
The cube $\QQ$ is called an $R$-window for the domain under consideration.
\end{df}

In what follows, the parameters
$\delta$ and $R$ are not explicitly specified.
We consider general Lipschitz domains, which does not lead to confusion.

Also, we use in the present paper standard Lipschitz spaces $\mathrm{Lip}_\al(D)$, $0<\alpha\le 1$. By definition, the space $\mathrm{Lip}_\al(D)$ consists of $f: D\to \Rbb$ such that
\[
\|f\|_{L^\infty(D)} + \sup_{x, y\in D,\, x\neq y} \frac{|f(x) - f(y)|}{|x-y|^{\alpha}} < \infty.
\]

\subsubsection{Zygmund spaces}
Following Janson \cite{Ja2}, we consider general moduli of continuity.

\begin{df}[see \cite{Ja2}]\label{df1}
A continuous increasing function $\omega:[0,\infty)\rightarrow[0,\infty)$, $\omega(0)=0$,
 is called a modulus of continuity of order $n\in\Nbb$ if $n$ is the smallest positive integer such that   the following two regularity properties are satisfied:

1. For some $q$, $ n\leq q<n+1$, the function $\frac{\om(t)}{t^q}$
is \textit{almost decreasing}, that is, there exists a positive constant $C=C(q)$ such that
  \begin{equation}\label{eq:eq42}
 \omega(st)<Cs^q\omega(t),\;s>1.
\end{equation}

2. For any $r$, $n-1<r< n$, the function
$\frac{\om(t)}{t^r}$ is \textit{almost increasing},
that is, there exists a positive constant $C=C(r)$ such that
     \begin{equation}\label{eq:eq32}
 \omega(st)<Cs^r\omega(t), \;s<1.
\end{equation}
\end{df}

In the studies of Zygmund spaces, we use the term cube and the notation $Q$
for a cube in the space $\mathbb{R}^d$
with edges parallel to the coordinate axes.
Note that no such restriction is imposed on the cube $\QQ$ in Definition~\ref{df3}.
Let $|Q|$ denote the volume of the cube under consideration and let $\ell=\ell(Q)$ denote its side length.
Let $\mathcal{P}_n$ denote the space of polynomials of degree at most $n$.

  \begin{df}
 \label{df2}
Given a modulus of continuity $\omega$ of order $n \in \mathbb{N}$, the homogeneous Zygmund space $\mathcal{C}_{\om}(D)$ in a domain $D\subset  \mathbb{R}^d$ consists of those $f\in L^1_{loc}(D,dx)$ for which the Campanato type seminorm
\begin{equation}\label{eq:eq1}
   \|f\|_{\omega,D}=\sup_{Q\subset D} \inf_{P\in\mathcal{P}_n}\frac{1}{\omega(\ell)} \|f-P\|_{L^1(Q, dx/|Q|)}
\end{equation}
is finite.
\end{df}

\begin{rem}
Classical arguments based on the Calder\'{o}n--Zygmund lemma and used in the studies of the
standard space $\mathrm{BMO}(\mathbb{R}^d)$ and Lipschitz spaces $\mathrm{Lip}_\al(\mathbb{R}^d)$
(see, for example, \cite{C, M} and \cite[Sec.\,1.2]{KK}) allow to verify that the
$L^1$-norm in definition (\ref{eq:eq1}) is replaceable by
the $L^p$-norm, $1 < p\leq\infty$, in an arbitrary domain $D$.
The corresponding seminorms are equivalent and define the same space.
See Proposition~\ref{pr4} in Section 2 for further details and proofs.
\end{rem}

\subsection {T(1) and T(P) theorems}
For general moduli of continuity of order $n$, $n\in\Nbb$, Janson \cite[Sec.\,6]{Ja2}
proved that the homogeneous spaces $\mathcal{C}_{\omega}(\mathbb{R}^d)$
are invariant under certain Fourier multipliers.
The spaces $\mathcal{C}_{\omega}(\mathbb{R}^d)$ considered in \cite{Ja2}
are defined in terms of finite differences; in the present paper, we use polynomial approximation.
 Also, for domains, it is natural to consider the corresponding inhomogeneous spaces.
Indeed, for a bounded Lipschitz domain $D$, the set
$\mathcal{C}_{\omega}(D)$ is contained in the space $L^1(D,dx)$.
So, by definition, the inhomogeneous space $\mathcal{C}_{\omega}(D)$
is a Banach space with the following norm:
\[
\|f\| =\|f\|_{\omega,D}+\|f\|_{L^1(D,dx)}.
\]

The present paper is motivated by a T(1) theorem used in the proof of the
the following result by Mateu, Orobitg and Verdera  \cite[Main Lemma]{MOV}
in the setting of the Lipschitz spaces on domains $D\subset\mathbb{R}^d$.

\begin{thm}[{\cite[Main Lemma]{MOV}}; see also \cite{An}]\label{thm1}
Let $D$ be a bounded domain with $C^{1+\alpha}$-smooth boundary, $0<\al<1$.
Then the restricted Calder\'{o}n--Zygmund operator $T_D$ with an even kernel
maps the Lipschitz space $\lip_\alpha(D)$ into itself.
\end{thm}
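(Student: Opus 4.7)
The plan is to deduce the theorem from a $T(1)$-type criterion adapted to the Lipschitz scale on domains, which reduces boundedness of $T_D$ on $\lip_\al(D)$ to the single condition that $T_D(1)=T(\chi_D)|_D$ belongs to $\lip_\al(D)$; the accompanying weak boundedness condition is automatic for a smooth convolution Calder\'{o}n--Zygmund operator. It therefore suffices to prove that
\[
f(x) := \mathrm{p.v.}\int_D K(x-y)\,dy, \qquad x\in D,
\]
is $\al$-H\"older continuous on $D$. The deep interior case is handled routinely: for $x$ far from $\partial D$ one writes $\chi_D = 1 - \chi_{D^c}$ and uses $T(1)=0$ together with smoothness of $K$ away from its diagonal singularity.

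The central geometric observation in the boundary case is that, because $\Omega$ is \emph{even} and $K$ is homogeneous of degree $-d$, for any open half-space $H\subset\Rbb^d$ the function $x\mapsto \mathrm{p.v.}\int_H K(x-y)\,dy$ is \emph{constant} on $H$: after the substitution $z=y-x$ and use of evenness, the region $H-x$ depends on $x$ only through an overall dilation by $\mathrm{dist}(x,\partial H)$, under which the integrand is homogeneous of degree $0$. Hence if $\partial D$ were flat, $f$ would be literally constant on $D$. In the actual setting, for points $x_1,x_2\in D$ with $r:=|x_1-x_2|$ small, I would pick a nearest boundary point $a\in\partial D$, use a $C^{1+\al}$ chart to flatten $\partial D$ near $a$, and locally realise $\partial D$ as a graph $y_d = A(y')$ with $A(0)=0$, $\nabla A(0)=0$ and $|A(y')|\lesssim |y'|^{1+\al}$. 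Letting $H$ denote the tangent half-space, the decomposition
\[
f(x_j) = \mathrm{p.v.}\!\int_H K(x_j-y)\,dy + \int_{D\triangle H} K(x_j-y)\,dy
\]
makes the first summand independent of $j$, so $f(x_1)-f(x_2)$ is just the variation of the second integral. A dyadic splitting of $D\triangle H$ around $a$ then closes the estimate: the $C^{1+\al}$ bound yields $|(D\triangle H)\cap B(a,\rho)|\lesssim \rho^{d+\al}$, which combined with $|K(z)|\lesssim |z|^{-d}$ on $B(a,2r)$ and with $|K(x_1-y)-K(x_2-y)|\lesssim r\,|a-y|^{-d-1}$ on dyadic annuli outside produces a convergent sum bounded by $r^\al$.

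The main obstacle is the handling of the principal value when $x_1, x_2$ lie close to $\partial D$, for there the raw pointwise estimates on $K$ fail and the leading singular contributions must be tamed by cancellation. The evenness hypothesis is essential precisely here: without it, the half-space integral would not be constant and $f$ would inherit a non-H\"older singular contribution from the portion of $H$ near $x_j$; with it, the singular mass of $H$ cancels and only the geometrically small symmetric difference $D\triangle H$ remains. Making this cancellation rigorous uniformly in the choice of boundary window, and gluing the resulting local near-boundary estimates to the classical interior estimate, is the technical heart of the argument.
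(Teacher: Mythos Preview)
The paper does not contain a proof of this theorem. Theorem~\ref{thm1} is quoted from \cite{MOV} (with a pointer also to \cite{An}) purely as motivation and background for the paper's own main result, Theorem~\ref{thm3}; the body of the paper proves only Theorem~\ref{thm3}. There is therefore no ``paper's own proof'' to compare your attempt against.

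That said, your sketch is recognisably the Mateu--Orobitg--Verdera strategy: reduce to checking $T(\chi_D)\in\lip_\al(D)$ via a $T(1)$ criterion, exploit evenness of $\Omega$ to see that the half-space potential is constant, and control the symmetric difference $D\triangle H$ using the $C^{1+\al}$ graph bound $|A(y')|\lesssim|y'|^{1+\al}$. As a high-level outline this is correct, and you have correctly located the delicate point (uniform control of the principal value near $\partial D$). Note, however, that your write-up is explicitly a plan rather than a proof: you defer the actual justification of the cancellation near the boundary to ``the technical heart of the argument'' without carrying it out, and the interior paragraph (``$\chi_D=1-\chi_{D^c}$ and $T(1)=0$'') needs a precise meaning for $T(1)$ beyond $L^2$ before it can be invoked.

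It is also worth contrasting your route with what the present paper actually does. Theorem~\ref{thm3} is a \emph{characterisation} on general Lipschitz domains: boundedness of $T_D$ on $\mathcal{C}_\om(D)$ is equivalent to conditions (i)--(ii) on $T_D P$ for polynomials $P$. It does not by itself prove Theorem~\ref{thm1}; to recover Theorem~\ref{thm1} from Theorem~\ref{thm3} one would still have to verify those testing conditions, and it is precisely there that the $C^{1+\al}$ smoothness of $\partial D$ and the evenness of the kernel (neither of which is assumed in Theorem~\ref{thm3}) would enter---essentially via the same half-space cancellation you describe.
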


A related T(1) theorem for Hermit--Calder\'{o}n--Zygmund operators is proven in \cite{BCFST13}.
Theorem~\ref{thm1} is extended in \cite{V1} to weakly smooth spaces between
$\lip_\alpha(D)$ and $\bmo(D)$, that is,
the integer order $n=0$ is considered.

Observe that Theorem~\ref{thm1} is not only of independent interest,
but also has interesting and important applications.
In particular, Theorem~\ref{thm1} is used in \cite{MOV} to obtain
results on regularity of quasi-regular functions,
i.e., solutions of the Beltrami equation on the complex plane.
Further development of this topic is related to the regularity
of solutions to second-order elliptic equations in divergent form.
Also, Theorem~\ref{thm1} is combined in \cite{MOV} with results by
Tolsa~\cite{T05} to establish
a direct relation between removable sets
for the bounded quasi-regular functions and bounded holomorphic functions.

Next, let $\mathcal{P}_{n}(D)$ denote the space of polynomials
from $\mathcal{P}_{n}$ multiplied by the characteristic function of the domain $D$.
In this paper, higher orders of smoothness are considered.
So, we are also motivated by the following result of Prats and Tolsa \cite{PT}.

\begin{thm}[{\cite[Theorem 1.6]{PT}}]\label{thm2}
Let $D$ be a Lipschitz domain,
{$T_D$} be a restricted $C^n$-smooth convolution Calder\'{o}n--Zygmund operator,
$n\in\mathbb{N}$ and $p > d$. Then the operator $T_D$ is bounded on the Sobolev space $W^{n,p}(D)$
if and only if
$T_D P\in W^{n,p}(D)$ for any polynomial $P\in\mathcal{P}_{n-1}(D)$.
\end{thm}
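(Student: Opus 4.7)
The plan is to treat the two implications separately. Necessity is immediate: since $D$ is bounded, every $P\in\mathcal{P}_{n-1}(D)$ lies in $W^{n,p}(D)$, so boundedness of $T_D$ on $W^{n,p}(D)$ forces $T_DP\in W^{n,p}(D)$ at once. All the work is in the reverse direction.

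For sufficiency, assume $T_DP\in W^{n,p}(D)$ for every $P\in\mathcal{P}_{n-1}(D)$ and fix $f\in W^{n,p}(D)$. First I would extend $f$ to $F\in W^{n,p}(\mathbb{R}^d)$ with $\|F\|_{W^{n,p}(\mathbb{R}^d)}\lesssim \|f\|_{W^{n,p}(D)}$, using Stein's extension for Lipschitz domains; classical Calder\'on--Zygmund theory then gives $TF\in W^{n,p}(\mathbb{R}^d)$. The identity
\[
T_Df=(TF)|_D-T(F\chi_{D^c})|_D
\]
reduces the problem to showing $T(F\chi_{D^c})|_D\in W^{n,p}(D)$. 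For this, take a Whitney decomposition $\{Q_k\}$ of $D^c$ with a subordinate smooth partition of unity $\{\varphi_k\}$. Since $p>d$, the embedding $W^{n,p}(\mathbb{R}^d)\hookrightarrow C^{n-1,\alpha}(\mathbb{R}^d)$ with $\alpha=1-d/p$ lets one define, for each $k$, the Taylor polynomial $P_k\in\mathcal{P}_{n-1}$ of $F$ at a boundary point $\xi_k\in\partial D$ closest to $Q_k$, and then split
\[
F\chi_{D^c}=\sum_k(F-P_k)\varphi_k+\sum_k P_k\varphi_k.
\]

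The first sum is a controlled remainder: Taylor's theorem together with $\ell(Q_k)\sim\mathrm{dist}(Q_k,\partial D)$ and $|\partial^\gamma \varphi_k|\lesssim\ell(Q_k)^{-|\gamma|}$ yields $\|\sum_k(F-P_k)\varphi_k\|_{W^{n,p}(\mathbb{R}^d)}\lesssim\|F\|_{W^{n,p}(\mathbb{R}^d)}$, so its $T$-image remains in $W^{n,p}(\mathbb{R}^d)$ by the CZ theorem. The polynomial sum is the heart of the matter. For a single cube, $T(P_k\chi_{Q_k})|_D$ can be related to $T_DP_k$ by choosing a large ball $B$ containing $\overline{D}$, writing $P_k\chi_B=P_k\chi_D+P_k\chi_{B\setminus D}$, and noting that $T(P_k\chi_B)\in W^{n,p}(\mathbb{R}^d)$ by the $L^p$-boundedness of $T$, while $T(P_k\chi_D)|_D=T_DP_k$ is controlled by hypothesis. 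Telescoping the neighbouring differences $P_k-P_{k'}$, which are polynomials of degree $\le n-1$ satisfying $\|P_k-P_{k'}\|_{L^\infty(Q_k)}\lesssim\ell(Q_k)^n\bigl(\mathcal{M}|\nabla^n F|^p\bigr)^{1/p}(\xi_k)$, then translates the single-cube estimates into a uniform bound on the full sum with $L^p$-summable coefficients.

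The main obstacle, I expect, is precisely this last quantitative step: the hypothesis $T_DP\in W^{n,p}(D)$ is a priori only qualitative and must be upgraded to bounds with constants uniform across the infinite Whitney family and with a summable tail. This is where a Coifman--Meyer-style paraproduct decomposition adapted to polynomials of degree $\le n-1$ enters, with the functions $T_DP$ playing the role of the symbol. The combinatorics of overlapping Whitney cubes along a Lipschitz boundary and the careful bookkeeping of derivatives falling on $\varphi_k$ versus on $P_k$ (which couples the smooth remainder and the polynomial part through product-rule terms, reintroducing lower-order polynomial contributions at each differentiation) are the technical heart of the argument.
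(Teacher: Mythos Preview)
The paper does not contain a proof of this statement. Theorem~\ref{thm2} is quoted from \cite[Theorem~1.6]{PT} purely as background and motivation for the paper's own main result (Theorem~\ref{thm3} on Zygmund spaces); the authors give no argument for it and simply refer the reader to Prats and Tolsa. There is therefore nothing in the present paper to compare your attempt against.

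As for the proposal on its own merits: the architecture you describe --- Stein extension, the splitting $T_Df=(TF)|_D-T(F\chi_{D^c})|_D$, Whitney decomposition of the complement, separation into Taylor remainders and polynomial pieces --- is indeed the skeleton of the argument in \cite{PT}. But you have correctly located and then not resolved the decisive step: passing from the qualitative hypothesis $T_DP\in W^{n,p}(D)$ for each individual $P$ to a quantitative estimate that is summable over the infinite Whitney family. Your invocation of a ``Coifman--Meyer-style paraproduct'' is a placeholder rather than a mechanism; in \cite{PT} this step is lengthy and goes through a different route, with finite-dimensionality of $\mathcal{P}_{n-1}$ used to extract a uniform constant from the hypothesis and a Carleson-type control over Whitney cubes replacing any paraproduct machinery. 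The claim that $\sum_k(F-P_k)\varphi_k\in W^{n,p}(\mathbb{R}^d)$ is also not a one-line Taylor estimate: when all $n$ derivatives land on $\varphi_k$ one picks up $\ell(Q_k)^{-n}$ against a remainder of order $\ell(Q_k)^{n-1+\alpha}$, and summing this genuinely uses the Lipschitz geometry of $\partial D$. So the outline points in the right direction, but both of the hard steps remain at the level of heuristics.
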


By analogy with T(1) theorems,
Prats and Tolsa \cite{PT} refer to the above theorem as a T(P) theorem to indicate
explicitly that the corresponding characterization uses values of the operator
$T$ on the polynomials of appropriate degree.
Note that the kernel of the operator under consideration in Theorem~\ref{thm2} is not assumed to be even.
Also, it is shown in~\cite{PT} that Theorem~\ref{thm2} implies
regularity results, in terms of Sobolev spaces, for solutions of the Beltrami equation.

In the present paper, we obtain a similar T(P) result for the Zygmund spaces.

\subsection{Main theorem}
Given a modulus of continuity $\omega$,
the associated modulus of conti\-nuity $\widetilde{\omega}$ is defined as follows:
 \begin{equation}
 \label{eq:eq2}
   \widetilde{\omega}(x)= \frac{\omega(x)}{\max \left\{1,\int_x^1 \omega(t)t^{-n-1}dt\right\}}.
    \end{equation}

\begin{thm}\label{thm3}
Let $\omega$ be a modulus of continuity of order $n\in \mathbb{N}$
and let $D\subset \mathbb{R}^d$ be a bounded Lipschitz domain.
Let $T$ be a homogeneous
 $C^{n+1}$-smooth Calder\'{o}n--Zygmund operator.
Then the restricted operator $T_D$ is bounded on the space $\mathcal{C}_{\om} (D)$
if and only if two following properties hold:
\begin{enumerate}
  \item[(i)]  $T_D P \in \mathcal{C}_{\omega} (D)$ for any polynomial $P\in\mathcal{P}_n(D)$;
  \item[(ii)] for any cube $Q \subset D$ centered at $x_0$
  and for any polynomial $P_{x_0}$,
  homogeneous of degree $n$ with respect to $x-x_0$, there exists a polynomial
  $S_Q\in\mathcal{P}_n(D)$ such that
\[
     \|T_D(\chi_D P_{x_0}) - S_Q\|_{L^1(Q, dx/|Q|)}\leq C \|P\| \omt(\ell(Q))
\]
with a constant $C$ independent of $Q$.
\end{enumerate}
\end{thm}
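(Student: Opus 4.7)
Any $P\in\mathcal{P}_n(D)$ has $\|P\|_{\om,D}=0$ (on each cube $Q\subset D$ one may take $P$ itself as the approximant), so $\mathcal{P}_n(D)\subset\mathcal{C}_\om(D)$ and (i) follows at once from boundedness of $T_D$. Condition (ii) is more subtle, since $\omt\leq\om$ pointwise and mere $\mathcal{C}_\om$-membership of $T_D(\chi_D P_{x_0})$ is too weak. The extra decay should be extracted from the identity $\chi_D P_{x_0}=P_{x_0}-\chi_{\mathbb{R}^d\setminus D}P_{x_0}$ together with an $n$-th order Taylor expansion of $K$ near $\partial D$: the resulting dyadic sum produces exactly the factor $\int_\ell^1\om(t)t^{-n-1}\,dt$ that converts $\om$ into $\omt$.

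\textbf{Sufficiency.} Fix $f\in\mathcal{C}_\om(D)$ and a cube $Q\subset D$ centered at $x_0$ with side $\ell$. Choose a near-best polynomial approximant $P\in\mathcal{P}_n$ of $f$ on $2Q\cap D$ and write
\[
f = P\chi_D + (f-P)\chi_{2Q\cap D} + (f-P)\chi_{D\setminus 2Q}.
\]
Applying $T_D$ yields three contributions, which I estimate on $Q$ modulo a suitable Campanato approximant $P'\in\mathcal{P}_n$. The local term is controlled via the $L^p$-boundedness of $T$ on $\mathbb{R}^d$ combined with the $L^p$-equivalence of the Campanato seminorm noted after Definition~\ref{df2}. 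For the polynomial term, expand $P(x)=\sum_{|\alpha|\leq n}c_\alpha(x-x_0)^\alpha$: monomials with $|\alpha|<n$ are handled by (i), giving $\mathcal{C}_\om$-approximation with error of order $\om(\ell)$, while the homogeneous degree-$n$ part is handled by (ii), giving an error controlled by a constant times $|c_\alpha|\,\omt(\ell)$.

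For the non-local term I Taylor-expand the kernel at $x_0$,
\[
K(y-x)=\sum_{|\beta|\leq n}\frac{(y-x_0)^\beta}{\beta!}\,\partial^\beta K(x_0-x)+R_n(y,x),\qquad |R_n(y,x)|\lesssim\frac{\ell^{n+1}}{|x_0-x|^{d+n+1}}.
\]
The polynomial-in-$y$ part is absorbed into $P'$. The remainder is bounded by a dyadic decomposition of $D\setminus 2Q$ into annuli centered at $x_0$, by the Campanato estimate on each scale, and by the almost-decreasing property of $\om(t)/t^q$ for some $q\in[n,n+1)$, summing to a quantity of order $\om(\ell)\|f\|_{\om,D}$.

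\textbf{Main obstacle.} The critical coupling is that the coefficients of $P$ feed into (ii). Telescoping the best polynomial approximants across dyadic scales between $\ell$ and the unit scale shows that, for $|\alpha|=n$,
\[
|c_\alpha|\lesssim\max\{1,\int_\ell^1\om(t)t^{-n-1}\,dt\}\,\|f\|_{\om,D}.
\]
Plugging into the bound from (ii), the degree-$n$ contribution is of order $|c_\alpha|\,\omt(\ell)\lesssim\om(\ell)\|f\|_{\om,D}$ by the very definition of $\omt$ in (\ref{eq:eq2}); it is precisely this cancellation that motivates the form of $\omt$. Assembling the local, polynomial, and non-local bounds then gives $\|T_Df-P'\|_{L^1(Q,dx/|Q|)}\lesssim\om(\ell)\|f\|_{\om,D}$, which is the desired boundedness.
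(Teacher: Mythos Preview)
Your sufficiency argument is correct and matches the paper's approach closely: the three-term decomposition, the $L^2$ estimate for the local piece, the Taylor expansion of the kernel for the far-away piece, and most importantly the splitting of $P$ into its degree $<n$ and degree $=n$ parts, with the telescoping bound $|c_\alpha|\lesssim\xi(\ell)\|f\|$ for $|\alpha|=n$ exactly cancelling against $\omt(\ell)$. That is precisely the mechanism the paper uses.

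The gap is in your necessity argument for (ii). You propose to extract the improvement from $\om(\ell)$ to $\omt(\ell)$ via the identity $\chi_D P_{x_0}=P_{x_0}-\chi_{\mathbb{R}^d\setminus D}P_{x_0}$ and a kernel expansion near $\partial D$, claiming that a dyadic sum then produces the factor $\int_\ell^1\om(t)t^{-n-1}\,dt$. But there is no $\om$ anywhere in a direct kernel estimate of $T_D$ applied to a polynomial; $\om$ enters the problem only through the \emph{hypothesis} that $T_D$ is bounded on $\mathcal{C}_\om(D)$, and that hypothesis is vacuous when applied to $\chi_D P_{x_0}$ itself, since its Campanato seminorm is zero. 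Your scheme would, if it worked, prove (ii) unconditionally for every Lipschitz domain, which would make (ii) redundant in the statement of the theorem---and it is not (the theorem is stated as a genuine two-condition characterization in the non-Dini case).

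The paper's device is the opposite of a direct estimate: it constructs an explicit \emph{extremal} function
\[
\varphi_{e,x_0}(x)=\int_{|\langle x-x_0,e\rangle|}^{1}\frac{\om(t)}{t^{n+1}}\bigl(t-\langle x-x_0,e\rangle\bigr)^n\,dt
\]
lying in $\mathcal{C}_\om(D)$ with uniformly bounded norm, whose near-best approximant on a cube $Q$ centered at $x_0$ has top-degree coefficient of size $\gtrsim\xi(\ell)$. One then applies the assumed boundedness of $T_D$ to $\varphi_{e,x_0}$, strips off the local and far-away pieces via the same Lemma you use in sufficiency, and is left with $\|T_D(\chi_D P_Q)-S_Q\|_{L^1(Q,dx/|Q|)}\lesssim\om(\ell)$. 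Dividing by the large coefficient $|A_{n,Q,e}|\gtrsim\xi(\ell)$ converts $\om(\ell)$ into $\omt(\ell)$ and yields (ii) for $\langle x-x_0,e\rangle^n$; varying $e$ gives all homogeneous degree-$n$ polynomials. In short, necessity of (ii) is proved by \emph{testing} the boundedness hypothesis on a function whose top coefficients saturate the bound you established in sufficiency, not by a direct kernel computation.
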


It is worth mentioning that Theorem~\ref{thm3} applies to the classical Zygmund spaces
$\mathcal{Z}_n(D) := \mathcal{C}_{\om_n}(D)$, where $\om_n(t) = t^n$, $n\in \Nbb$.

\begin{cor}\label{cor3}
Let $n\in \mathbb{N}$ and $D\subset \mathbb{R}^d$ be a bounded Lipschitz domain.
Let $T$ be a homogeneous $C^{n+1}$-smooth Calder\'{o}n--Zygmund operator.
Then the restricted operator $T_D$ is bounded
on the Zygmund space $\mathcal{Z}_{n}(D)$
if and only if two following properties hold:
\begin{enumerate}
  \item[(i)]  $T_D P \in \mathcal{Z}_{n} (D)$ for any polynomial $P\in\mathcal{P}_n(D)$;
  \item[(ii)] for any cube $Q \subset D$ centered at $x_0$
  and for any polynomial $P_{x_0}$,
  homogeneous of degree $n$ with respect to $x-x_0$, there exists a polynomial
   $S_Q\in\mathcal{P}_n(D)$ such that
 \[
     \|T_D(\chi_D P_{x_0}) - S_Q\|_{L^1(Q, dx/|Q|)}\leq C \|P\|
     \frac{\ell^n}{\max \left\{1, \log\frac{1}{\ell} \right\}}
 \]
 with a constant $C$ independent of $Q$.
\end{enumerate}
\end{cor}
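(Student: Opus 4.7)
The plan is to derive Corollary~\ref{cor3} as a direct specialization of Theorem~\ref{thm3} applied to the modulus $\om_n(t)=t^n$. Two short verifications are required: that $\om_n$ is a modulus of continuity of order exactly $n$ in the sense of Definition~\ref{df1}, and that the associated modulus defined by \eqref{eq:eq2} takes the explicit form appearing in property~(ii) of the corollary.

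For the first verification, I would take $q=n$ in condition~1 of Definition~\ref{df1}: the ratio $\om_n(t)/t^q\equiv 1$ is constant, hence trivially almost decreasing, so \eqref{eq:eq42} holds. For condition~2, I would pick any $r\in(n-1,n)$, so that $\om_n(t)/t^r=t^{n-r}$ is genuinely increasing and \eqref{eq:eq32} is satisfied. To check that $n$ is the \emph{smallest} such integer, I observe that if $\om_n$ were of order $m<n$, condition~1 would require $\om_n(t)/t^q=t^{n-q}$ to be almost decreasing for some $q<m+1\le n$, which is impossible since $n-q>0$.

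For the second verification, I would compute, using \eqref{eq:eq2},
\[
\int_x^1 \om_n(t)\,t^{-n-1}\,dt \;=\; \int_x^1\frac{dt}{t} \;=\; \log\frac{1}{x},\qquad 0<x\le 1,
\]
so that the associated modulus equals
\[
\widetilde{\om}_n(x) \;=\; \frac{x^n}{\max\{1,\log(1/x)\}}.
\]
Substituting $\om_n$ and the formula above for $\widetilde{\om}_n$ into the statement of Theorem~\ref{thm3} reproduces verbatim the statement of Corollary~\ref{cor3}.

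I do not anticipate any serious obstacle: the corollary is genuinely a corollary, with the entire content reducing to the two bookkeeping steps above. The role of the statement is simply to make explicit what the general associated modulus of \eqref{eq:eq2} becomes in the classical integer-order Zygmund scale, namely the power $x^n$ improved by a logarithmic factor near the origin.
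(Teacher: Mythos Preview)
Your proposal is correct and matches the paper's approach: the corollary is stated immediately after Theorem~\ref{thm3} as the specialization to $\om_n(t)=t^n$, with no separate proof given, so the verification that $\om_n$ has order exactly $n$ and the direct computation $\int_x^1 t^{-1}\,dt=\log(1/x)$ yielding $\widetilde{\om}_n(x)=x^n/\max\{1,\log(1/x)\}$ are precisely the bookkeeping the reader is expected to fill in.
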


\begin{rem}
A modulus of continuity $\omega$ of order $n$ is called Dini regular if the integral
 \[\int_0 \omega(t)t^{-n-1}dt
 \]
 converges.
In this case, the functions
$\omega(x)$ and $\widetilde{\omega}(x)$ equivalent.
Therefore, the formulation of Theorem~\ref{thm3} essentially simplifies
and becomes a typical T(P) theorem: property~(ii) is superfluous,
since it follows from property~(i).
In the general setting, the functions $\omega(x)$ and  $\widetilde{\omega}(x)$ are not equivalent,
and property~(ii) based on $\widetilde{\omega}(x)$, in general,
does not follow from property~(i).
\end{rem}

\begin{rem}
If the functions $\om$ and $\widetilde{\omega}$ are not equivalent, then Theorem~\ref{thm3} becomes
asymmetric, in a sense.
Indeed, the space $\mathcal{C}_{\om}(D)$ is defined in terms
of $\om$,
however, property~(ii) from Theorem~\ref{thm3} is based on the modulus of continuity $\widetilde{\omega}$.
In particular, Corollary~\ref{cor3} illustrates such an asymmetry.
\end{rem}

\begin{rem}
Nonequivalent moduli of continuity may have
equivalent associated moduli of continuity. For instance,
we have $\omt_{s}\approx\omega_{-1}$
for the family of moduli of continuity $\om_{s}(t)=t \log^{s}1/t$, $s>-1$.
The operator $T_D$ is bounded on the space $\mathcal{C}_{\om_s} (D)$ for all
$s>-1$ if and only if it is bounded for some $s>-1$.
In fact, the family $\om_{s}(t)=t \log^{s}1/t$, $s\in\mathbb{R}$,
and the corresponding scale of Zygmund spaces $\mathcal{C}_{\om_{s}}(D)$
may serve as a useful working example for Theorem~\ref{thm3}.
These spaces reflect specific properties of the Zygmund scale in comparison with the Lipschitz one.
\end{rem}

\begin{rem}\label{rem11}
Let us explain our choice of
$\mathcal{P}_n$ as the approximating polynomial space in equality~(\ref{eq:eq1}).
Let $\mathcal{C}_{\omega, k}(D)$
denote the space generated by Definition~\ref{df2}
after replacement of $\mathcal{P}_n$ by the space $\mathcal{P}_k$.

$\bullet$ If $k>n$, then  the Marchaud type inequality for local polynomial approximations
(see, for example, \cite[Ch.\,4]{KK} for the power moduli of continuity)
guarantees that the corresponding seminorm defined by (\ref{eq:eq1}) generates the same space $\mathcal{C}_{\omega}(D)$,
up to factoriza\-tion by the polynomial space
$\mathcal{P}_k$.

$\bullet$ If $\om(t)=o(t^n)$, then for $k<n$, the space $\mathcal{C}_{\omega, k}(D)$
is trivial and coincides with the space of approximating polynomials
$\mathcal{P}_k(D)$.

$\bullet$ If $t^n=O(\om(t))$, then the value $k=n-1$ is admissible and
generates the scale of the Lipschitz--Bernstein spaces
$\mathcal{C}_{\omega, n-1}(D)$;
the standard Lipschitz space $\mathrm{Lip}_1(D)$
corresponds to the modulus of continuity $\om(t)=t$.
The scale of the spaces $\mathcal{C}_{\omega, n-1}(D)$
and that of the Zygmund spaces $\mathcal{C}_{\om}(D)$ are different.
In the present work, the spaces $\mathcal{C}_{\omega, n-1}(D)$
are not considered, since they are not
invariant under the convolution Calder\'{o}n--Zygmund
operators even in the case $D=\mathbb{R}^d$.
\end{rem}

\subsection {Notation and organization of the paper}
In Section 2, we introduce basic facts about the space $\mathcal{P}_n(D)$
and prove certain basic properties of the Zygmund spaces on domains.
The proof of the T(P) theorem is given in Section 3.

As usual, the letter $C$  denotes a constant, which may change from line to line
and does not depend of the relevant variables under consideration.
Notation $A\lesssim B$ means that there is a fixed positive constant
$C$ such that $A <C B$.
If $A\lesssim B\lesssim A$, then we write $A\approx B$
and we say that $A$ and $B$ are equivalent.

\section{Auxiliary results}

\subsection{Moduli of continuity and approximating polynomials}
A given modulus of continuity $\om$ is replaceable by an equivalent $C^\infty$-smooth
modulus of continuity $\widehat{\om}$ with the same natural parameter $n$.
Thus, in what follows, we assume that $\om$ is a $C^\infty$-smooth function on the ray $(0,\infty)$.

\begin{lem}[see, for example,  {\cite[Lemma~4]{Ja2}}]
\label{lem21}
Let $\om$ be a modulus of continuity.
Property (\ref{eq:eq42}) with parameter $q$ for the function $\om$ implies the estimate
  \begin{equation}
  \label{eq: eq22}
 \int_{t}^{\infty}\om(s)s^{-p-1}ds\lesssim \om(t)t^{-p},\quad p>q.
\end{equation}
Property (\ref{eq:eq32}) with parameter $r$ for the function $\om$ implies the estimate
\begin{equation}
\label{eq: eq21}
 \int_{0}^{t}\om(s)s^{-p-1}ds \lesssim \om(t)t^{-p},\quad p<r.
\end{equation}
 \end{lem}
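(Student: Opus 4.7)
The plan is to reduce each of the two estimates to an elementary integral of a power function by first upgrading the respective almost-monotonicity condition in Definition~\ref{df1} to a pointwise comparison between $\omega(s)$ and $\omega(t)$ valid on the corresponding half-line of integration.

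For the first inequality, I would first observe that (\ref{eq:eq42}) can be rewritten in pointwise form: substituting $s \mapsto s/t$ (with $s > t$ so that $s/t > 1$) and the base point $t$ into $\omega(ut) \leq C u^q \omega(t)$ for $u>1$ yields
\[
\omega(s) \leq C \left(\frac{s}{t}\right)^q \omega(t), \qquad s > t.
\]
Inserting this bound into the left-hand side gives
\[
\int_t^\infty \omega(s) s^{-p-1}\, ds \leq C\,\omega(t) t^{-q} \int_t^\infty s^{q-p-1}\, ds,
\]
and the remaining integral is an elementary power-law tail that converges precisely because $p > q$, evaluating to $\frac{1}{p-q}\, t^{q-p}$. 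Collecting the factors produces the target bound $\omega(t) t^{-p}$ up to a constant depending only on $p-q$.

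For the second inequality, the argument is symmetric. The almost-increasing hypothesis (\ref{eq:eq32}) rewrites pointwise as
\[
\omega(s) \leq C \left(\frac{s}{t}\right)^r \omega(t), \qquad s < t,
\]
obtained by applying the inequality to $u = s/t < 1$. Substituting into $\int_0^t \omega(s) s^{-p-1}\, ds$ and factoring out $\omega(t)t^{-r}$ reduces the problem to $\int_0^t s^{r-p-1}\, ds$, which is finite because $p < r$ and evaluates to $\frac{1}{r-p}\, t^{r-p}$. This again collapses to $\omega(t) t^{-p}$ with a constant depending only on $r-p$.

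There is essentially no technical obstacle: the only care required is the correct transfer of the multiplicative form $\omega(st) \lesssim s^q\omega(t)$, $s>1$ (respectively $s<1$), into the pointwise bound used above, and verifying that the resulting power-law integrand is integrable on the relevant half-line, which is exactly the content of the strict inequalities $p > q$ and $p < r$ in the hypothesis.
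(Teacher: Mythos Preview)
Your argument is correct and is exactly the standard elementary proof: rewrite the almost-monotonicity inequality as a pointwise bound $\omega(s)\lesssim (s/t)^q\omega(t)$ (respectively $(s/t)^r\omega(t)$) on the relevant half-line, and then integrate the resulting power of $s$. The paper does not give its own proof of this lemma but simply cites \cite[Lemma~4]{Ja2}, where the same computation is carried out; your write-up matches that approach.
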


Since any two norms on the space $\mathcal{P}_n$ are equivalent, the following lemma holds.

\begin{lem}[see~{\cite{DVS, KK}}]
\label{lem1}
Let $Q$ be a cube in $\mathbb{R}^d$ with center $x_0$ and side length $\ell$,
 $P= \sum_{|k|=0}^{n}a_k(x-x_0)^k$ be a polynomial on $\mathbb{R}^d$,
where $k=(k_1,\dots,k_d)\in \Zbb_+^d$ is a multiindex, $|k|=|k_1|+\dots+|k_d|$.
Then
 \[\sup_{x\in Q}|P(x)| \leq \sqrt{n} \sum_{|k|=0}^{n}|a_k| \ell^{|k|}
 \le C(n,d) \frac{1}{|Q|}\int_Q |P(x)|dx.
 \]
\end{lem}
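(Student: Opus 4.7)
The plan is to establish the two inequalities independently.

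For the first inequality, I would argue directly. Since $x_{0}$ is the expansion point of $P$ and both $x$ and $x_{0}$ can be taken to lie in the cube $Q$ of side length $\ell$, each coordinate satisfies $|x_{i}-x_{0,i}|\le \ell$, so $|(x-x_{0})^{k}|\le \ell^{|k|}$ for every multi-index $k$. Applying the triangle inequality to the monomial expansion of $P$ yields
\[
 \sup_{x\in Q}|P(x)| \;\le\; \sum_{|k|=0}^{n}|a_{k}|\,\ell^{|k|},
\]
and the combinatorial factor $\sqrt{n}$ is absorbed trivially.

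For the second (and main) inequality I would exploit the finite-dimensionality of $\mathcal{P}_{n}$ combined with a scaling argument. Let $Q_{0}=[-1/2,1/2]^{d}$ be the unit cube at the origin and set $\widetilde{P}(y):=P(x_{0}+\ell y)=\sum_{|k|\le n}b_{k}y^{k}$ with $b_{k}=a_{k}\ell^{|k|}$. A linear change of variables gives
\[
 \frac{1}{|Q|}\int_{Q}|P(x)|\,dx \;=\; \int_{Q_{0}}|\widetilde{P}(y)|\,dy.
\]
Both maps $\widetilde{P}\mapsto \sum_{|k|\le n}|b_{k}|$ and $\widetilde{P}\mapsto \int_{Q_{0}}|\widetilde{P}(y)|\,dy$ are norms on the finite-dimensional vector space $\mathcal{P}_{n}$ (the second is a norm because a nonzero polynomial cannot vanish on a set of positive measure). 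By the equivalence of any two norms on a finite-dimensional space, there exists $C(n,d)>0$ with
\[
 \sum_{|k|\le n}|b_{k}| \;\le\; C(n,d)\int_{Q_{0}}|\widetilde{P}(y)|\,dy.
\]
Unwinding $b_{k}=a_{k}\ell^{|k|}$ and using the scaling identity above produces
\[
 \sum_{|k|=0}^{n}|a_{k}|\,\ell^{|k|} \;\le\; C(n,d)\,\frac{1}{|Q|}\int_{Q}|P(x)|\,dx,
\]
as desired.

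The only non-trivial ingredient is the equivalence of norms on $\mathcal{P}_{n}$, which is classical; the explicit constant depends on $n$ and $d$ through the combinatorics of the space of monomials of degree at most $n$ in $d$ variables. Accordingly, I do not anticipate any genuine obstacle: the proof is essentially a bookkeeping exercise, and the only subtlety is keeping the dilation factors $\ell^{|k|}$ and the Jacobian $\ell^{d}=|Q|$ straight when passing between $Q$ and $Q_{0}$.
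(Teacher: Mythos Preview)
Your proposal is correct and is precisely the approach the paper indicates: the paper gives no detailed proof but prefaces the lemma with the remark ``Since any two norms on the space $\mathcal{P}_n$ are equivalent, the following lemma holds'' and cites \cite{DVS, KK}. Your reduction by scaling to the unit cube followed by norm equivalence on the finite-dimensional space $\mathcal{P}_n$ is exactly the standard way to make that one-line justification precise.
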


Lemma~\ref{lem1} implies the following
lemma, where the notation $\ell_i=\ell(Q_i)$ is used for $Q_i$.

\begin{lem}
\label{lem2}
Let $Q_1\subset Q_2$ be two cubes in the space $\mathbb{R}^d$.
For every polynomial $P\in \mathcal{P}_n$, the following estimate holds:
 \[\|P\|_{L^1(Q_2, dx/|Q_2|)} \le C(n,d) \left(\frac{\ell_2}{\ell_1}\right)^n \|P\|_{L^1(Q_1, dx/|Q_1|)}.
 \]
\end{lem}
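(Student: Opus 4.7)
The plan is to pass through an $L^{\infty}$ bound on $Q_{2}$ after expanding $P$ around the center of the \emph{smaller} cube $Q_{1}$. Concretely, let $x_{0}$ be the center of $Q_{1}$ and write $P(x)=\sum_{|k|=0}^{n}a_{k}(x-x_{0})^{k}$. This centering is crucial: it is what ties the coefficient size to integration over $Q_{1}$.

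The first step is to apply the second inequality of Lemma~\ref{lem1} to $P$ on the cube $Q_{1}$. That lemma gives
\[
\sum_{|k|=0}^{n}|a_{k}|\,\ell_{1}^{|k|}\;\le\; C(n,d)\,\|P\|_{L^{1}(Q_{1},dx/|Q_{1}|)},
\]
so the coefficients in the expansion are controlled by the $L^{1}$-average of $P$ on $Q_{1}$. Next, for any $x\in Q_{2}$, since $x_{0}\in Q_{1}\subset Q_{2}$ and both $x$ and $x_{0}$ lie in the cube $Q_{2}$ of side length $\ell_{2}$, we have $|x-x_{0}|_{\infty}\le \ell_{2}$, hence $|(x-x_{0})^{k}|\le \ell_{2}^{|k|}$. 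Factoring out $\ell_{1}^{|k|}$ and using $\ell_{2}\ge \ell_{1}$ to bound $(\ell_{2}/\ell_{1})^{|k|}\le (\ell_{2}/\ell_{1})^{n}$ yields
\[
|P(x)|\;\le\;\sum_{|k|=0}^{n}|a_{k}|\,\ell_{1}^{|k|}\Bigl(\frac{\ell_{2}}{\ell_{1}}\Bigr)^{|k|}\;\le\;\Bigl(\frac{\ell_{2}}{\ell_{1}}\Bigr)^{n}\sum_{|k|=0}^{n}|a_{k}|\,\ell_{1}^{|k|}.
\]

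Combining the two estimates gives $\|P\|_{L^{\infty}(Q_{2})}\le C(n,d)(\ell_{2}/\ell_{1})^{n}\|P\|_{L^{1}(Q_{1},dx/|Q_{1}|)}$, and passing from the supremum to the normalized $L^{1}(Q_{2},dx/|Q_{2}|)$ norm is trivial since the latter is dominated by the former. This proves the asserted inequality. There is essentially no obstacle here: the argument only uses the equivalence of norms on the finite-dimensional space $\mathcal{P}_{n}$ (encoded in Lemma~\ref{lem1}) together with the trivial containment $Q_{1}\subset Q_{2}$; the role of the expansion around $x_{0}\in Q_{1}$ is simply to make the coefficient bounds obtainable from integration on the smaller cube.
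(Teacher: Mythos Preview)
Your proof is correct and is precisely the argument the paper has in mind: the paper does not spell out a proof of Lemma~\ref{lem2} at all, merely stating that it follows from Lemma~\ref{lem1}, and your expansion around the center of $Q_{1}$ followed by the coefficient bound from Lemma~\ref{lem1} and the trivial pointwise estimate on $Q_{2}$ is exactly how that implication goes.
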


Given a cube $Q$ and $s>0$, let $sQ$ denote
the cube whose center coincides with the center of $Q$ and whose side length is equal to $s\ell(Q)$.

 \begin{df}
 \label{df22}
Let $f\in\mathcal{C}_{\om}(D)$ and $Q\subset D$ be a cube.
We say that $P_Q \in \mathcal{P}_n$ is a polynomial
of \textit{near best} approximation for the function $f$ on the cube $Q$ if
 \[\|f-P_Q\|_{L^1(Q, dx/|Q|)}\le C \om(\ell)\|f\|_{\om,D},
 \]
where the constant $C>0$ does not depend on $f$ and $Q$.

To extend functions from $\mathcal{C}_{\om}(D)$ to the entire space $\mathbb{R}^d$,
we consider the auxiliary space $\mathcal{C}^{int}_{\om}(D)$.
Namely, for $f\in L^1_{loc}(D)$, the corresponding
norm is defined by the following equality:
\begin{equation}
\label{eq:eq3}
   \|f\|^{int}_{\omega,D}=\sup_{Q:\, 2Q\subset D} \inf_{P\in\mathcal{P}_n}\frac{1}{\omega(\ell)} \|f-P\|_{L^1(Q, dx/|Q|)}.
\end{equation}
Similarly, for a function $f\in\mathcal{C}^{int}_{\om}(D)$, one introduces the polynomials
 of \textit{near best} approximation on cubes.
\end{df}

In both cases, the required approximating polynomials
can be selected in a certain unified way
(see~\cite{M}).
The construction below follows the corresponding argument from \cite[Ch.~1]{KK}.
Put $Q_0 = [-1/2, 1/2]^n$.
Let $\mathbb{P}$ be
an arbitrary projector from $L^1(Q_0,dx)$ onto $\mathcal{P}_n$.
Since $\mathcal{P}_n$ is a finite dimensional space, the operator $\mathbb{P}$ is bounded on
$L^p(Q_0,dx)$, $1 \leq p \leq \infty$.
Using a shift and a dilation, we transplant
the operator $\mathbb{P}$ to an arbitrary cube $Q$.
The norm of the resulting projector $\mathbb{P}_Q$ on $L^p(Q; dx/|Q|)$ does not depend on $Q$.
In particular, we obtain
\[\|\mathbb{P}_Q(f)\|_{L^\infty(Q)}\lesssim \frac{1}{|Q|}\int_Q |f|
\]
with a constant independent of $Q$ and $f$.
Next, for an arbitrary polynomial $u \in\mathcal{P}_n$, we have
$\mathbb{P}_Q(f-u)=\mathbb{P}_Q(f)-u$, hence,
\[\|\mathbb{P}_Q(f)-u\|_{L^{\infty}(Q)} \lesssim \|f-u\|_{L^1(Q, dx/|Q|)}.
\]
Therefore, in what follows, we assume that
$P_Q=\mathbb{P}_Q(f)$ is a polynomial of near best approximation on
$Q$ in any $L^p(Q)$-metric, $1 \leq p \leq \infty$.

\begin{lem}
\label{lem3}
Let $Q_1\subset Q_2\subset 4 Q_1$ be cubes in $D$
and let $P_{Q_1}, P_{Q_2} \in \mathcal{P}_n$ be polynomials of near
best approximation for $f\in \mathcal{C}_\omega(D)$ on the cubes $Q_1$ and $Q_2$,
respectively.
Then
\begin{equation}\label{eq:eq6}
\|P_{Q_1}-P_{Q_2}\|_{L^1(Q_2, dx/|Q_2|)}\le C(n,d)\omega(\ell_2)\|f\|_{\omega, D}.
\end{equation}
A similar lemma holds for $f\in \mathcal{C}^{int}_\omega(D)$, with appropriate changes.
\end{lem}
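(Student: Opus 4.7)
The plan is to reduce the claim to the triangle inequality via Lemma~\ref{lem2}, using that $Q_2\subset 4Q_1$ makes the ratio $\ell_2/\ell_1$ bounded by an absolute constant depending only on $d$.

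First, I observe that $P_{Q_1}-P_{Q_2}\in\mathcal{P}_n$, so Lemma~\ref{lem2} applied to the pair $Q_1\subset Q_2$ yields
\[
\|P_{Q_1}-P_{Q_2}\|_{L^1(Q_2,dx/|Q_2|)}\le C(n,d)\left(\frac{\ell_2}{\ell_1}\right)^n \|P_{Q_1}-P_{Q_2}\|_{L^1(Q_1,dx/|Q_1|)}.
\]
Since $\ell_2\le 4\ell_1$, the factor $(\ell_2/\ell_1)^n$ is absorbed into the constant. It therefore suffices to estimate $\|P_{Q_1}-P_{Q_2}\|_{L^1(Q_1,dx/|Q_1|)}$.

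Next, I would apply the triangle inequality by inserting $f$:
\[
\|P_{Q_1}-P_{Q_2}\|_{L^1(Q_1,dx/|Q_1|)}\le \|f-P_{Q_1}\|_{L^1(Q_1,dx/|Q_1|)}+\|f-P_{Q_2}\|_{L^1(Q_1,dx/|Q_1|)}.
\]
The first term is bounded directly by $C\omega(\ell_1)\|f\|_{\omega,D}$ through the near best approximation property of $P_{Q_1}$, and since $\omega$ is increasing and $\ell_1\le\ell_2$, this is at most $C\omega(\ell_2)\|f\|_{\omega,D}$. For the second term I replace the average over $Q_1$ by the average over $Q_2$ at the price of the volume ratio:
\[
\|f-P_{Q_2}\|_{L^1(Q_1,dx/|Q_1|)}\le \frac{|Q_2|}{|Q_1|}\,\|f-P_{Q_2}\|_{L^1(Q_2,dx/|Q_2|)}\le 4^d\, C\omega(\ell_2)\|f\|_{\omega,D},
\]
where the last inequality uses that $P_{Q_2}$ is of near best approximation on $Q_2$. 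Combining the two displays completes the bound (\ref{eq:eq6}).

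The argument is entirely mechanical once Lemma~\ref{lem2} is in place; there is no real obstacle. The only point to watch is the normalization of $L^1$ by $|Q|$: one must remember that comparing $L^1(Q_1,dx/|Q_1|)$ with $L^1(Q_2,dx/|Q_2|)$ for an integrand supported on $Q_1$ produces the ratio $|Q_2|/|Q_1|$, which is bounded by $4^d$ under the hypothesis $Q_2\subset 4Q_1$. For the interior variant with $f\in\mathcal{C}^{int}_\omega(D)$, the same proof applies verbatim provided one replaces the hypothesis $Q_2\subset D$ by $2Q_2\subset D$ so that both near best approximating polynomials $P_{Q_1}$ and $P_{Q_2}$ are admissible in the supremum defining $\|f\|^{int}_{\omega,D}$.
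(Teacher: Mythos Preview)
Your proof is correct and is exactly the fleshed-out version of what the paper intends: the paper states Lemma~\ref{lem3} with only the remark ``By the triangle inequality, we obtain the following result,'' and your argument supplies the missing details, using Lemma~\ref{lem2} to pass from $Q_1$ to $Q_2$ and then splitting $P_{Q_1}-P_{Q_2}$ through $f$ precisely as one would expect.
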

\begin{proof}
The analogue of estimate \eqref{eq:eq6} for $\|P_{Q_1}-P_{Q_2}\|_{L^1(Q_1, dx/|Q_1|)}$
holds by the triangle inequality.
Application of Lemma~\ref{lem2} finishes the proof.
\end{proof}

\subsection {Whitney coverings}
Fix a dyadic grid of semi-open cubes in $\mathbb{R}^d$.
 \begin{df}
\label{df44}
A collection of cubes $\mathcal{W}$ is called a Whitney covering of a Lipschitz domain $D$ if
the following conditions are fulfilled.
\begin{enumerate}
 \item [(i)] The collection $\mathcal{W}$ consists of dyadic cubes.
 \item [(ii)] The cubes from $\mathcal{W}$ are pairwise disjoint.
 \item [(iii)] The union of the cubes in $\mathcal{W}$ is $D$.
 \item [(iv)] $\mathrm{diam}(Q)\leq \mathrm{dist}(Q, \partial D) \leq 4\mathrm{diam}(Q)$.
 \item [(v)] If $Q$ and $R$ are neighbor cubes
 (i.e., $\overline{Q}\cap \overline{R}\neq \varnothing$), then $\ell(Q)\leq 4\ell(R)$.
 \item [(vi)] The family $\{\frac{6}{5}Q\}_{Q\in\mathcal{W}}$
 has finite superposition, i.e.,
      \[
      \sup_D\sum_{Q\in\mathcal{W}}\chi_{\frac{6}{5}Q}<\infty.
      \]
\end{enumerate}
\end{df}
Such coverings are well
known in the literature and widely used
(see \cite[Ch.\,6]{S}).

Each $R$-window $\QQ$
induces a vertical direction,
given by the eventually rotated $x_d$ axis.
The following property
easily follows (see \cite[Sec.\,3]{PT}) from the above properties
and the fact that the domain under consideration is Lipschitz:

\textrm{(vii)}
\textit{The number of Whitney cubes with the same side length, intersecting
a given vertical line in a window, is uniformly bounded. The corresponding vertical direction is the one induced by the window.
This is the last property of the Whitney cubes we need in what follows.
}

In fact, we need a Whitney covering $\mathcal{W}$ for a Lipschitz domain $D$ as well as a Whitney covering $\mathcal{W}'$ for its complement $D'=\mathbb{R}^d\setminus \overline{D}$.

\subsection{Extension of functions from domain to the entire Euclidean space}
To prove the following result, it suffices
to repeat the arguments used in the proof of Proposition~B.1 from \cite{V1}.

\begin{pr}
\label{pr1}
Let $\om(t)$ be a modulus of continuity of order $n \in \mathbb{N}$ and
$D\subset \mathbb{R}^d$ be a bounded Lipschitz domain.
Then the set $\mathcal{C}^{int}_{\om} (D)$ is contained in the space $L^1(D)$.
\end{pr}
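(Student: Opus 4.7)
The plan is to estimate $\int_D|f|\,dx$ via a Whitney decomposition of $D$ together with polynomials of near best approximation on each Whitney cube. Fix a Whitney covering $\mathcal{W}$ of $D$ as in Definition~\ref{df44}; property~(iv) implies $2Q\subset D$ for every $Q\in\mathcal{W}$, so the seminorm (\ref{eq:eq3}) applies on each such cube and one may select $P_Q=\mathbb{P}_Q f\in\mathcal{P}_n$ with $\|f-P_Q\|_{L^1(Q,dx/|Q|)}\le C\omega(\ell(Q))\|f\|^{int}_{\omega,D}$. Fix also a reference Whitney cube $Q_*\in\mathcal{W}$. The triangle inequality gives
\[
\int_D|f|\,dx \le \sum_{Q\in\mathcal{W}}\int_Q|f-P_Q|\,dx+\sum_{Q\in\mathcal{W}}|Q|\,\|P_Q-P_{Q_*}\|_{L^\infty(Q)}+|D|\,\|P_{Q_*}\|_{L^\infty(D)}.
\]
The first sum is at most $C\|f\|^{int}_{\omega,D}\omega(\mathrm{diam}(D))|D|$ by monotonicity of $\omega$ and pairwise disjointness of Whitney cubes, and the third is trivially finite.

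The essential step is to control the middle sum. For each $Q\in\mathcal{W}$ I would build a chain of neighboring Whitney cubes $Q=Q^0,Q^1,\dots,Q^{N_Q}=Q_*$; by property~(v) consecutive side lengths differ by at most a factor of $4$, and the Lipschitz structure together with property~(vii) allows the chain to be chosen with $N_Q\lesssim\log(\ell(Q_*)/\ell(Q))$. Neighboring Whitney cubes are not directly comparable by Lemma~\ref{lem3}, so I would bridge each pair $Q^i,Q^{i+1}$ by a short sequence of auxiliary cubes $R$ of side comparable to $\min(\ell(Q^i),\ell(Q^{i+1}))$ divided by a large dimensional constant, placed across their common face. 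Taking that constant large enough ensures $2R\subset D$ and makes the $\mathcal{C}^{int}_\omega$ analogue of Lemma~\ref{lem3} applicable to each consecutive nested pair of auxiliary cubes. This yields
\[
\|P_{Q^i}-P_{Q^{i+1}}\|_{L^\infty(Q^i)} \le C\omega(\ell(Q^i))\|f\|^{int}_{\omega,D}.
\]
Since the first $i$ cubes of the chain all fit into a cube of side $\lesssim \ell(Q^i)$, Lemmas~\ref{lem1} and~\ref{lem2} transfer this polynomial bound to $Q$ at the cost of a bounded factor. Telescoping,
\[
\|P_Q-P_{Q_*}\|_{L^\infty(Q)} \lesssim \|f\|^{int}_{\omega,D}\sum_{i=0}^{N_Q-1}\omega(\ell(Q^i)) \lesssim \|f\|^{int}_{\omega,D}\,\omega(\mathrm{diam}(D))\log(1/\ell(Q)).
\]

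Grouping Whitney cubes by scale finishes the proof: property~(vii) and the Lipschitz boundary yield that the number of Whitney cubes of side length $2^{-k}$ is $\lesssim 2^{k(d-1)}$, so
\[
\sum_{Q\in\mathcal{W}}|Q|\log(1/\ell(Q)) \lesssim \sum_{k\ge 1}k\cdot 2^{k(d-1)}\cdot 2^{-kd} = \sum_{k\ge 1}k\cdot 2^{-k} < \infty,
\]
which makes the middle sum finite and proves $f\in L^1(D)$. The main obstacle is the bridging construction in the previous paragraph: one must produce auxiliary cubes so that each consecutive pair fits the nested hypothesis $Q_1\subset Q_2\subset 4Q_1$ of Lemma~\ref{lem3} while simultaneously respecting the restrictive constraint $2R\subset D$, which is tight for Whitney cubes near $\partial D$. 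The Lipschitz graph description of $\partial D$ is exactly what makes this geometric construction feasible, and once it is handled the remaining estimates are routine bookkeeping.
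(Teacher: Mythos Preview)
Your outline is sound. The paper does not actually supply a proof of Proposition~\ref{pr1}; it simply refers the reader to Proposition~B.1 of \cite{V1}, so a line-by-line comparison is impossible. What you sketch --- a Whitney decomposition, a fixed reference cube $Q_*$, a chain of Whitney cubes connecting each $Q$ to $Q_*$ with length $\lesssim\log(1/\ell(Q))$, telescoping of the near-best polynomials along the chain, and the count $\sharp\{Q\in\mathcal{W}:\ell(Q)=2^{-k}\}\lesssim 2^{k(d-1)}$ coming from the Lipschitz boundary --- is exactly the standard argument for this type of embedding on Lipschitz (more generally, John or uniform) domains, and is the route taken in the cited reference. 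Two minor remarks: (a) for the third term you could note that $P_{Q_*}=\mathbb{P}_{Q_*}f$ is well defined because $f\in L^1_{loc}(D)$ and $\overline{Q_*}\Subset D$, so ``trivially finite'' is indeed justified; (b) the bridging step you flag as the main obstacle is routine once you observe that property~(iv) gives $\mathrm{dist}(c,\partial D)\ge\sqrt{d}\,\ell(Q^i)$ for any $c\in\overline{Q^i}$, so any auxiliary cube $R$ centered on the common face with $\ell(R)<\min(\ell(Q^i),\ell(Q^{i+1}))$ automatically satisfies $2R\subset D$, and a bounded number (depending only on $n$ and $d$) of nested dilations by factor $\le 4$ then connects $Q^i$ to $R$ and $R$ to $Q^{i+1}$ within the hypothesis of Lemma~\ref{lem3}.
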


Observe that Proposition~\ref{pr1} implies that one may equip
the inhomogeneous space $\mathcal{C}^{int}_{\om}(D)$ with the following norm:
\[
\|f\|=\|f\|^{int}_ {\omega,D}+\|f\|_{L^1(D,dx)}.
\]
We have $\mathcal{C}_{\om}(D)\subset\mathcal{C}^{int}_{\om}(D)$, thus,
\[
\|f\|=\|f\|_ {\omega,D}+\|f\|_{L^1(D,dx)}
\]
is a norm on the space $\mathcal{C}_{\om}(D)$.

If the modulus of continuity
$\om(t)$ is Dini regular, then arguments
from the monograph by Stein \cite[Ch.\,VI]{S} are applicable
for construction of an extension to the entire set $\mathbb{R}^d$.
In the general setting, we apply the approach used
by Jones \cite{Jo} for the space $\mathrm{BMO}$ and by
DeVore and Sharpley \cite{DVS} for the Besov spaces on uniform domains.
In particular, we need the following lemma.

\begin{lem}[{\cite[Lemma~5.2]{DVS}}]\label{lem5}
Let $D$ be a Lipschitz domain and $f\in L^1(D, dx)$.
Then there exist three positive constants $C$, $c$, $r_0$
depending only on the Lipschitz constants of $D$ and having the following property:
if $Q$ is a cube in
$\mathbb{R}^d$ with $\ell(Q)< r_0$
and such that $2Q\cap \partial D\neq \varnothing$, then
   \[
   \int_Q|\widetilde{f}-\widetilde{P}_Q|dx\leq C \sum_{S\subset cQ,\:S\in \mathcal{W}}\int_{S'}|f-P_{S'}|dx,
   \]
where $\widetilde{P}_Q$ is an appropriate polynomial in $\mathcal{P}_n$
and $S'=\frac{9}{8}S$ for each cube $S$.
\end{lem}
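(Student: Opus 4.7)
The plan is to construct the extension $\widetilde{f}$ by a Whitney reflection scheme in the spirit of Jones and DeVore--Sharpley, and then bound the oscillation of $\widetilde{f}$ on $Q$ by chaining through neighbouring Whitney cubes of $\mathcal{W}$. I will first set $\widetilde{f} = f$ on $D$ and extend to $D' = \mathbb{R}^d \setminus \overline{D}$ as follows: for every cube $R \in \mathcal{W}'$ I fix a reflected cube $\rho(R) \in \mathcal{W}$ with $\ell(\rho(R)) \approx \ell(R)$ and $\mathrm{dist}(R, \rho(R)) \lesssim \ell(R)$, which exists because $\partial D$ is Lipschitz. Then put $\widetilde{f}|_R := P_{\rho(R)'}$, the projector polynomial $\mathbb{P}_{\rho(R)'}(f)$ from the construction preceding Lemma~\ref{lem3}, where $\rho(R)' = \tfrac{9}{8}\rho(R)$.

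For the given cube $Q$ with $\ell(Q) < r_0$ and $2Q \cap \partial D \neq \varnothing$, I pick a \emph{root} Whitney cube $S_0 \in \mathcal{W}$ with $\ell(S_0) \approx \ell(Q)$ and $S_0 \subset cQ$ for a suitable absolute constant $c$, and set $\widetilde{P}_Q := P_{S_0'}$. Splitting the integral by Whitney cubes,
\[
\int_Q |\widetilde{f} - \widetilde{P}_Q|\,dx \le \sum_{\substack{S \in \mathcal{W}\\ S \cap Q \neq \varnothing}} \int_{S \cap Q}|f - P_{S_0'}|\,dx + \sum_{\substack{R \in \mathcal{W}'\\ R \cap Q \neq \varnothing}} \int_{R \cap Q}|P_{\rho(R)'} - P_{S_0'}|\,dx.
\]
On each $\mathcal{W}$-term I add and subtract $P_{S'}$, reducing matters to $\int_{S'}|f - P_{S'}|$ plus the polynomial difference $\|P_{S'} - P_{S_0'}\|_{L^\infty(S)}$; on each $\mathcal{W}'$-term only the polynomial difference $\|P_{\rho(R)'} - P_{S_0'}\|_{L^\infty(R)}$ remains.

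The core step is to control every polynomial difference $\|P_{S'} - P_{S_0'}\|_{L^\infty}$ for a Whitney cube $S \in \mathcal{W}$ (including the reflected ones $\rho(R)$) that actually contributes. I build a chain $S_0, S_1, \dots, S_m = S$ of pairwise neighbouring cubes in $\mathcal{W}$, all lying inside $cQ$ and of side lengths comparable to $\ell(Q)$. Properties (iv), (v) and (vii) of Definition~\ref{df44}, together with the Lipschitz nature of $\partial D$, force $m$ to be uniformly bounded. For neighbouring cubes the dilates $S_i'$ and $S_{i+1}'$ overlap on a set of measure $\approx |S_i'|$, so Lemma~\ref{lem1} applied on this overlap gives
\[
\|P_{S_i'} - P_{S_{i+1}'}\|_{L^\infty} \lesssim \frac{1}{|S_i'|}\int_{S_i'}|f - P_{S_i'}|\,dx + \frac{1}{|S_{i+1}'|}\int_{S_{i+1}'}|f - P_{S_{i+1}'}|\,dx.
\]
Telescoping along the chain, then multiplying by $|S \cap Q| \lesssim |S_j'|$ and summing over all Whitney cubes meeting $Q$, produces the desired bound; the finite-superposition property (vi) and the uniform chain-length bound prevent any $S_j'$ from being counted more than a controlled number of times.

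The main obstacle is exactly this chaining construction near $\partial D$: one must show that the chain length and the dilation constant $c$ can be chosen uniformly in $Q$. The Lipschitz graph structure inside each $R$-window allows motion tangential to $\partial D$ within a bounded dilate of $Q$, while property (vii) controls the number of Whitney cubes crossed in the normal direction. Once these two uniformities are in place, properties (iv)--(vi) of the Whitney covering produce the bound claimed in the lemma.
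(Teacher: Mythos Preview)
The paper does not give its own proof of this lemma; it is quoted verbatim from DeVore--Sharpley \cite[Lemma~5.2]{DVS}. So there is no in-paper argument to compare against, and what matters is whether your sketch actually establishes the inequality.

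Your architecture (split $Q$ into its Whitney pieces in $D$ and in $D'$, choose a root cube $S_0$, telescope polynomial differences along chains of neighbouring Whitney cubes) is the right one, but the argument as written has a genuine gap. You assert that the chain $S_0,S_1,\dots,S_m=S$ consists of cubes ``of side lengths comparable to $\ell(Q)$'' and that therefore $m$ is uniformly bounded. This is false in the present situation: because $2Q\cap\partial D\neq\varnothing$, the Whitney cubes $S\in\mathcal{W}$ meeting $Q$ (and likewise the reflected cubes $\rho(R)$) can have side length arbitrarily small compared with $\ell(Q)$. Any chain of neighbouring Whitney cubes from $S_0$ (with $\ell(S_0)\approx\ell(Q)$) to such an $S$ must descend through all intermediate dyadic scales, so $m\approx\log\bigl(\ell(Q)/\ell(S)\bigr)$ and is unbounded. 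Consequently your ``multiply by $|S\cap Q|$ and sum'' step, combined with the finite-superposition property~(vi), does not by itself control the double sum.

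What actually makes the DeVore--Sharpley argument close is a \emph{shadow} (or Carleson-type) estimate replacing your bounded-chain-length claim: one constructs the chains along a tree so that, for each fixed Whitney cube $T\subset cQ$, the cubes $S$ whose chain passes through $T$ are contained in the ``shadow'' of $T$ toward $\partial D$, and the Lipschitz graph structure gives $\sum_{S:\,T\in\mathrm{chain}(S)}|S|\lesssim |T|$. Swapping the order of summation then yields exactly $\sum_{T\subset cQ}\int_{T'}|f-P_{T'}|$. Property~(vii) is used here to bound the measure of the shadow, not the chain length. You should rework the ``core step'' along these lines.

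A smaller point: your piecewise-polynomial extension $\widetilde{f}|_R=P_{\rho(R)'}$ is not the extension the paper subsequently uses in Proposition~\ref{pr3}; there $\widetilde f$ is defined by \eqref{eq:eq222} with a smooth partition of unity $\{\psi_Q\}$. The lemma is stated for that specific $\widetilde f$, so if you supply a proof you should either work with \eqref{eq:eq222} directly (the chaining argument goes through with only cosmetic changes, since on each $R\in\mathcal{W}'$ the sum $\sum\psi_{R'}P_{\widetilde{R'}}$ is a bounded combination of nearby $P_{\widetilde{R'}}$) or explain why the two extensions give equivalent oscillation bounds on $Q$.
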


To obtain the required extension, we first fix a $C^{\infty}$-smooth partition
of unity $\{\psi_Q\}_{Q\in \mathcal{W}'}$
associated with a Whitney covering $\mathcal{W}'$ for $D'=\mathbb{R}^d\setminus \overline{D}$.
By definition, this means that the functions $\psi_Q$ have the following properties:
$\psi_Q$ is $C^{\infty}$-smooth,
$\chi_{\frac{4}{5}Q}\leq \psi_Q\leq \chi_{\frac{5}{4}Q}$, $Q\in \mathcal{W}'$, and
$\sum_{Q\in \mathcal{W}'} \psi_Q =\chi_{D'}.$

Given a Whitney cube $Q\in \mathcal{W}'$,
we say that a  Whitney cube $\widetilde{Q}\in \mathcal{W}$ is \textit{reflective}
to $Q$ provided that $\widetilde{Q}$ is a maximal cube such that
 $\textrm{dist}(Q, \widetilde{Q})\leq 2 \textrm{dist}(Q, \partial D)$.
Let $P_{\widetilde{Q}}$ denote a polynomial of near best approximation for
$f$ on the cube $\widetilde{Q}$.

Define an extension of $f$ as follows:
\begin{equation}\label{eq:eq222}
 \widetilde{f}=f\chi_D+\sum_{Q\in \mathcal{W}',\:\ell(Q)\leq R} \psi_Q P_{\widetilde{Q}},
\end{equation}
where $R$ is the Lipschitz constant from Definition~\ref{df3}.

\begin{pr}\label{pr3}
Let $\om$ be a modulus of continuity of order $n\in\mathbb{N}$ and $f \in \mathcal{C}^{int}_{\om}(D)$.
Then the function $\widetilde{f}$ defined by equality (\ref{eq:eq222})
has the following properties:
\begin{itemize}
  \item [(i)] the support of $\widetilde{f}$ is compact;
  \item [(ii)] the function $\widetilde{f}$ is $C^\infty$-smooth in the domain
   $D'=\mathbb{R}^d\setminus \overline{D};$
  \item [(iii)] $\widetilde{f} \in L^1(\mathbb{R}^d,dx)\cap\mathcal{C}_{\omega}(\mathbb{R}^d)$ and
     \[
     \|\widetilde{f}\|_{\omega,\mathbb{R}^d}+\|\widetilde{f}\|_{L^1(\mathbb{R}^d,dx)}\lesssim\|f\|^{int}_ {\omega,D}+\|f\|_{L^1(D,dx)}.
     \]
\end{itemize}
\end{pr}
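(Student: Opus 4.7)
The plan is to verify properties (i)--(iii) in order. Properties (i) and (ii) are immediate from Whitney geometry: by property~(iv) of Definition~\ref{df44}, each $Q \in \mathcal{W}'$ with $\ell(Q) \leq R$ has $\mathrm{dist}(Q, \partial D) \leq 4\sqrt{d}R$, so every summand's support lies in a fixed bounded neighborhood of $\overline{D}$, giving compact support; property~(vi) ensures only finitely many $\psi_Q$ are nonzero on any compact subset of $D'$, and since each $\psi_Q P_{\widetilde{Q}}$ is $C^\infty$, so is $\widetilde{f}$ there.

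For the $L^1$ bound in (iii) I proceed termwise. For each $Q \in \mathcal{W}'$, the reflective property gives a cube $Q^{\sharp}$ containing $\widetilde{Q} \cup \tfrac{5}{4}Q$ with $\ell(Q^{\sharp}) \approx \ell(\widetilde{Q}) \approx \ell(Q)$; Lemmas~\ref{lem1} and~\ref{lem2} then yield
\[
\|P_{\widetilde{Q}}\|_{L^\infty(\frac{5}{4}Q)} \lesssim \|P_{\widetilde{Q}}\|_{L^1(\widetilde{Q}, dx/|\widetilde{Q}|)} \lesssim \frac{1}{|\widetilde{Q}|}\|f\|_{L^1(\widetilde{Q})} + \omega(\ell(\widetilde{Q}))\,\|f\|^{int}_{\omega,D},
\]
after which the bounded multiplicity of the reflection $Q \mapsto \widetilde{Q}$, finite overlap of the $\widetilde{Q}$'s, and boundedness of $D$ combine to sum up to the required estimate.

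For the Zygmund seminorm I fix a cube $Q^* \subset \mathbb{R}^d$ and distinguish three cases. If $2Q^* \subset D$, then $\widetilde{f} = f$ on $Q^*$ and (\ref{eq:eq3}) gives the bound directly. If $2Q^* \cap \partial D \neq \varnothing$, I apply Lemma~\ref{lem5}; each $S' = \tfrac{9}{8}S$ with $S \in \mathcal{W}$ sits comfortably inside $D$, so near-best approximation gives $\int_{S'}|f - P_{S'}| \lesssim \omega(\ell(S))|S|\,\|f\|^{int}_{\omega,D}$, while monotonicity of $\omega$, the inclusion $S \subset cQ^*$, inequality (\ref{eq:eq42}), and the Whitney disjointness bound $\sum_S |S| \lesssim |Q^*|$ collapse the sum to $\omega(\ell(Q^*))\,\|f\|^{int}_{\omega,D}$. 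Finally, for $Q^* \subset D'$ with $2Q^* \subset D'$, pick $S^* \in \mathcal{W}'$ meeting $Q^*$: if $\ell(S^*) \lesssim \ell(Q^*)$, the Whitney geometry of the exterior forces some enlargement $CQ^*$ to meet $\partial D$, reducing to the previous case; otherwise only finitely many $\psi_Q$ contribute on $Q^*$ at comparable scale, $P_{\widetilde{S^*}}$ serves as the approximant, and the error $\sum_Q \psi_Q(P_{\widetilde{Q}} - P_{\widetilde{S^*}})$ is controlled by iterating Lemma~\ref{lem3} along a chain of neighboring Whitney cubes in $D$.

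The main obstacle is the near-boundary case: Lemma~\ref{lem5} is the key input, and the delicate step is showing its right-hand side collapses to $\omega(\ell(Q^*))|Q^*|$ by balancing the near-best approximation errors against the growth of $\omega$ across scales, via the almost-monotonicity of $\omega(t)/t^q$ and $\omega(t)/t^r$ from Definition~\ref{df1}. A secondary subtlety arises in the exterior case with $\ell(S^*)\gtrsim \ell(Q^*)$, where controlling differences of near-best approximants on non-adjacent Whitney cubes requires chaining Lemma~\ref{lem3} along a finite path in $\mathcal{W}$; the resulting extra summation must be absorbed using inequality (\ref{eq: eq22}).
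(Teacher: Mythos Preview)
Your overall strategy matches the paper's: Lemma~\ref{lem5} handles cubes near the boundary, the $C^\infty$ structure of $\widetilde f$ in $D'$ handles exterior cubes, and the $L^1$ bound handles the rest. Two points of comparison are worth noting.

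First, your collapse of the sum coming from Lemma~\ref{lem5} is cleaner than the paper's. The paper invokes property~(vii) of the Whitney covering to count cubes of each dyadic scale, obtaining $\sharp S\lesssim(\ell(cQ^*)/\ell(S))^{d-1}$, and then sums over scales. Your argument---$\omega(\ell(S))\le\omega(c\ell(Q^*))$ by monotonicity, then $\sum_S|S|\lesssim|Q^*|$ by disjointness---reaches the same bound without any counting; property~(vii) is not actually needed here. Conversely, for cubes with $2Q^*\subset D'$ the paper simply writes that the estimate ``follows from the property $\widetilde f\in C^\infty(D')$,'' which is rather glib since the derivatives of $\widetilde f$ are not uniformly bounded near $\partial D$. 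Your subcase analysis (reduce to the boundary case when $\ell(S^*)\lesssim\ell(Q^*)$; otherwise chain Lemma~\ref{lem3} over finitely many contributing Whitney cubes) is the honest way to fill this in.

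There is one small omission. Lemma~\ref{lem5} carries the hypothesis $\ell(Q^*)<r_0$, so your boundary case only covers small cubes; you never say what happens for a cube $Q^*$ with $2Q^*\cap\partial D\neq\varnothing$ and $\ell(Q^*)\ge r_0$. The paper handles all large cubes in one stroke: once $\widetilde f\in L^1(\mathbb R^d)$ is established, take $\widetilde P_{Q^*}=0$ and use $\omega(\ell(Q^*))\ge\omega(r_0)>0$. You have already proved the $L^1$ bound, so just add this sentence.
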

\begin{proof}
  Properties (i) and (ii) are clear.
To prove (iii), we have to estimate the supremum on the right hand side of equality (\ref{eq:eq1})
for $D=\mathbb{R}^d$.

Firstly, we obtain the required estimates only for the cubes
$Q$ such that
$2Q\cap \partial D\neq \varnothing$ and $\ell(Q)< r_0$ for an appropriate parameter $r_0>0$.
Namely, we fix so small $r_0<R$ that Lemma~\ref{lem5}
holds for $r_0$, $c$, $C$, and the cube $cQ$ is contained in an $R$-window.

For any Whitney cube $S\in \mathcal{W}$, we have $ 2S'\subset D$, thus, Lemma~\ref{lem5} guarantees that

\[ I=\int_Q|\widetilde{f}-\widetilde{P}_Q|dx\leq C\|f\|_{\omega,D}^{int} \sum_{S\subset cQ,\:S\in \mathcal{W}}\omega (\ell(S)) \ell(S)^d.
\]

Now, we estimate the number of cubes of the same size in the above sum.
Property~(vii) of Whitney cubes, formulated after Definition~\ref{df44}, guarantees that, for every
Whitney cube $S\subset cQ$,
there exists a vertical line
(it is defined by the axis $x_d$ of the corresponding $R$-window),
which intersects finitely many  Whitney cubes with side length $\ell(S)$.
The number of the corresponding cubes is estimated above
by a constant $C$ depending only on the Lipschitz constants of the domain $D$.
Thus,
  \[
  \sharp{S}\lesssim \left(\frac{\ell(cQ)}{\ell(S)}\right)^{d-1},
  \]
where $\sharp{S}$ denotes the number of all cubes with side length
  $\ell(S)$ intersecting the cube $cQ$.

Let $s$ be the integer such that $2^s= \ell(S)$ and
let $m$ be the integer such that
$2^m\leq \ell(cQ)<2^{m+1}$.
Then
 \[\sharp{S}\lesssim\left(\frac{2^m}{2^s}\right)^{d-1}
 \]
with a constant independent of $Q$.
Since $\om$ is an increasing function, we obtain
\begin{align*}
 I
 &\lesssim\sum_{s=-\infty}^m\left(\frac{2^m}{2^s}\right)^{d-1}\omega (2^s) (2^s)^d
  \|f\|^{int}_ {\omega,D}
 =(2^m)^{d-1}\sum_{s=-\infty}^m{2^s}\omega (2^s) \|f\|^{int}_ {\omega,D} \\
 &\lesssim(2^m)^{d-1}\omega (2^m)\sum_{s=-\infty}^m{2^s} \|f\|^{int}_ {\omega,D}
  \lesssim(2^m)^{d}\omega (2^m) \|f\|^{int}_ {\omega,D}
  \lesssim |Q| \omega (\ell(Q))\|f\|^{int}_ {\omega,D},
\end{align*}
hence, we have the desired estimate for the small cubes located near the boundary
of the domain.

Next, if $\ell(Q)< r_0$ and $2Q\cap \partial D = \varnothing$,
then the required estimate for the supremum follows from the property
$\widetilde{f} \in C^\infty (D')$.

Finally, to prove the desired estimate for $\ell(Q)\ge r_0$, it suffices to show that
 $\widetilde{f}\in L^1(\mathbb{R}^d, dx)$.
The latter property follows from Proposition~\ref{pr1} and formula~\eqref{eq:eq222}.
The proof of the proposition is finished.
\end{proof}

\subsection{Equivalence of seminorms on Zygmund spaces}
Let $\om$ be a modulus of continu\-ity of order $n\in \Nbb$.
To prove the desired equivalence for different values of the parameter
$p$, $1\le p \le \infty$,
one may apply ideas from \cite{C, M}; see also \cite[Proposition~A.1]{V1}.

We need the following Calder\'{o}n--Zygmund lemma.
\begin{lem}[{\cite[Ch.\,1]{KK}}]
\label{lem6}
Let $Q$ be a cube, $f \in L^1(Q)$ and $A>\frac{1}{|Q|} \int_Q|f|$.
Then there exists an at most countable family $\{Q_i\}$
of dyadic cubes with disjoint interiors such that
   \begin{itemize}
     \item[(i)] $|f|\leq A$ a.e.\ on $Q\setminus\bigcup Q_i;$
   \item[(ii)] $A\leq 1/|Q_i| \int_{Q_i }|f|\leq 2^dA$.
   \end{itemize}
 \end{lem}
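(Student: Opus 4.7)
The plan is to run the classical dyadic stopping-time argument attributed to Calder\'on and Zygmund. Since the hypothesis $A>\frac{1}{|Q|}\int_Q|f|$ guarantees that $Q$ itself fails the ``large average'' test, I can begin the selection one generation down. Formally, I would produce a generation $\mathcal{G}_1$ consisting of the $2^d$ congruent dyadic children of $Q$, and for each $Q'\in\mathcal{G}_1$ I would check whether
\[
\frac{1}{|Q'|}\int_{Q'}|f|\,dx > A.
\]
Every cube on which this test succeeds is declared \emph{selected} and placed in the final family $\{Q_i\}$; every cube on which the test fails is bisected further, producing a generation $\mathcal{G}_2$, and the process is iterated. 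This yields an at most countable family of pairwise disjoint dyadic subcubes of $Q$, since distinct selected cubes are nested only in the trivial way (the smaller one would have been cut off by its selected ancestor), and countability comes from the fact that each $Q_i$ has positive measure and they live inside the bounded set $Q$.

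For property (ii), whenever a cube $Q_i$ is selected its dyadic parent $\widehat{Q}_i$ was \emph{not} selected (otherwise the process would have stopped one step earlier; for $Q_i\in\mathcal{G}_1$ this uses the starting hypothesis on $Q$). Hence $\frac{1}{|\widehat{Q}_i|}\int_{\widehat{Q}_i}|f|\le A$, and since $|\widehat{Q}_i|=2^d|Q_i|$,
\[
\frac{1}{|Q_i|}\int_{Q_i}|f|\,dx \le \frac{2^d}{|\widehat{Q}_i|}\int_{\widehat{Q}_i}|f|\,dx \le 2^d A,
\]
while the lower bound $A<\frac{1}{|Q_i|}\int_{Q_i}|f|\,dx$ is precisely the selection criterion. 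For property (i), a point $x\in Q\setminus\bigcup Q_i$ belongs to a decreasing chain of dyadic cubes $R_k\ni x$ with $\ell(R_k)\to 0$, \emph{none} of which is selected; thus $\frac{1}{|R_k|}\int_{R_k}|f|\,dx\le A$ for every $k$. By the Lebesgue differentiation theorem along the dyadic filtration (equivalently, by the fact that such shrinking dyadic cubes form a regular differentiation basis), $\frac{1}{|R_k|}\int_{R_k}|f|\,dx\to|f(x)|$ for almost every such $x$, giving $|f(x)|\le A$ a.e.\ on the complement.

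There is no serious obstacle here: the argument is entirely mechanical once the dyadic grid and the stopping rule are fixed. The only subtlety worth flagging is the appeal to the Lebesgue differentiation theorem on the dyadic filtration, and the observation that the starting hypothesis $A>\frac{1}{|Q|}\int_Q|f|$ is used exactly to justify that the top cube $Q$ is not itself selected, so the ``parent was not selected'' argument for (ii) applies uniformly to every $Q_i$, including those in the first generation.
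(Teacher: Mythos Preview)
Your argument is correct and is exactly the classical Calder\'on--Zygmund stopping-time construction. The paper itself does not supply a proof of this lemma: it is quoted as \cite[Ch.~1]{KK} and used as a black box in the proof of Proposition~\ref{pr4}, so there is nothing to compare against beyond noting that your write-up matches the standard proof one finds in the cited reference.
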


\begin{pr}
\label{pr4}
Let $\omega$ be a modulus of continuity
of order $n\in\mathbb{N}$ and $D\subset \mathbb{R}^d$
be a bounded domain. Then the seminorms
  \[ \|f\|_{\omega,D,p}=\sup_{Q\subset D} \inf_{P\in\mathcal{P}_n}\frac{1}{\omega(\ell)} \|f-P\|_{L^p(Q, dx/|Q|)}\]
are equivalent and define the same space $\mathcal{C}_{\om} (D)$ for $ 1\leq p\leq\infty.$
\end{pr}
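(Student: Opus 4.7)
The plan is to reduce the equivalence of all $L^p$-seminorms ($1\le p\le\infty$) to the two extreme cases $p=1$ and $p=\infty$. H\"older's inequality applied with respect to the normalized measure $dx/|Q|$ gives the trivial chain
\begin{equation*}
\|f\|_{\omega,D,1}\le\|f\|_{\omega,D,p}\le\|f\|_{\omega,D,\infty}
\end{equation*}
for every $1\le p\le\infty$, so the only non-trivial task is the reverse estimate $\|f\|_{\omega,D,\infty}\lesssim\|f\|_{\omega,D,1}$.

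The strategy is a telescoping chain argument at a.e.\ point. Fix $f$ with $N:=\|f\|_{\omega,D,1}<\infty$ and a cube $Q\subset D$, and write $P_Q=\mathbb{P}_Q(f)\in\mathcal{P}_n$ for the universal near-best approximation polynomial from the construction of Section 2.1. For a.e.\ $x_0\in Q$, let $Q_k$ be the unique subcube of $Q$ of side $2^{-k}\ell(Q)$ containing $x_0$ in the dyadic grid subordinate to $Q$. These cubes are automatically nested and satisfy $Q_{k+1}\subset Q_k\subset 4Q_{k+1}$, so Lemma~\ref{lem3} applies to each consecutive pair. Combining Lemma~\ref{lem1}, which bounds $L^\infty$ by normalized $L^1$ for any polynomial of degree at most $n$, with Lemma~\ref{lem3} yields
\begin{equation*}
|P_{Q_k}(x_0)-P_{Q_{k+1}}(x_0)|\le\|P_{Q_k}-P_{Q_{k+1}}\|_{L^\infty(Q_k)}\lesssim N\,\omega\bigl(2^{-k}\ell(Q)\bigr).
\end{equation*}
The almost-increasing property~\eqref{eq:eq32} of $\omega(t)/t^r$ for some $r>n-1\ge 0$ implies $\sum_k \omega(2^{-k}\ell(Q))\lesssim \omega(\ell(Q))$, so $(P_{Q_k}(x_0))_{k\ge 0}$ is a Cauchy sequence and
\begin{equation*}
\bigl|P_Q(x_0)-\lim_k P_{Q_k}(x_0)\bigr|\lesssim N\,\omega(\ell(Q)).
\end{equation*}

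To close the argument I would identify $\lim_k P_{Q_k}(x_0)=f(x_0)$ at every Lebesgue point $x_0$ of $f$. Applying Lemma~\ref{lem1} to the polynomial $P_{Q_k}-f(x_0)\in\mathcal{P}_n$ and then the triangle inequality gives
\begin{equation*}
\|P_{Q_k}-f(x_0)\|_{L^\infty(Q_k)}\lesssim \frac{1}{|Q_k|}\int_{Q_k}|P_{Q_k}-f|\,dx+\frac{1}{|Q_k|}\int_{Q_k}|f-f(x_0)|\,dx,
\end{equation*}
where the first term is at most $CN\,\omega(\ell(Q_k))\to 0$ by the $L^1$ near-best approximation, and the second tends to $0$ by the Lebesgue differentiation theorem. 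Hence $P_{Q_k}(x_0)\to f(x_0)$ a.e., so $|f(x_0)-P_Q(x_0)|\lesssim N\,\omega(\ell(Q))$ a.e.\ on $Q$; taking the essential supremum over $x_0\in Q$ and then the supremum over $Q\subset D$ yields $\|f\|_{\omega,D,\infty}\lesssim N$, which is the required bound.

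I expect the main obstacle to be the bookkeeping of the chain $\{Q_k\}$ so that the dilation hypothesis of Lemma~\ref{lem3} is satisfied while each $Q_k$ stays inside $D$; taking dyadic descendants of $Q$ as above handles both constraints automatically at the cost of discarding a null set of points lying on dyadic faces. Convergence of the telescoping sum rests crucially on the almost-increasing property~\eqref{eq:eq32}, which is precisely why $n\ge 1$ is assumed. The Calder\'{o}n--Zygmund decomposition of Lemma~\ref{lem6} is not required for this particular equivalence and enters the paper elsewhere; in fact a parallel proof via good-$\lambda$ inequalities on the distribution function of $|f-P_Q|$ is available but gives no sharper conclusion.
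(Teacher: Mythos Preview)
Your proof is correct and complete, but it follows a genuinely different route from the paper's own argument. The paper establishes the key estimate $\sup_Q|f-\mathbb{P}_Q(f)|\lesssim\omega(\ell)\|f\|_{\omega,D}$ via an \emph{iterated Calder\'on--Zygmund stopping-time argument}: one applies Lemma~\ref{lem6} to $|f-\mathbb{P}_Q(f)|$ with threshold $A\approx\omega(\ell)$, obtaining a family $\{Q_i'\}$ of subcubes of total measure $\le|Q|/2$ outside of which the pointwise bound already holds; iterating inside each $Q_i'$ produces nested families $\{Q_i^k\}$ with $\sum_i|Q_i^k|\le|Q|/2^k$, and a telescoping sum along the resulting chain gives the a.e.\ bound. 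You replace this stopping-time machinery by a direct pointwise telescoping along the \emph{deterministic} dyadic chain $(Q_k)$ shrinking to each Lebesgue point $x_0$, with Lebesgue differentiation identifying the limit. Both proofs ultimately rest on the same summability $\sum_k\omega(2^{-k}\ell)\lesssim\omega(\ell)$ coming from~\eqref{eq:eq32}; your version is shorter and more elementary, while the paper's is closer in spirit to the classical Campanato--Meyers arguments. One small correction to your closing remark: in this paper Lemma~\ref{lem6} is stated precisely \emph{for} the proof of this proposition and is not used elsewhere, so your approach would in fact remove it from the paper entirely rather than relocate its use.
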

\begin{proof}
It suffices to show that
\begin{equation}  \label{eq:eq9}
\sup_Q|f-\mathbb{P}_Q(f)|\lesssim\om(\ell)\|f\|_{\omega,D}
\end{equation}
with a constant independent of $Q$.
Here $\{\mathbb{P}_Q\}$ is the constructed in Sec.\,2.1 in a unified way family
of projectors from $L^1(Q,dx/|Q|)$ onto the polynomial subspace $\mathcal{P}_n(Q)$.
Let $C$ be the universal constant appearing in the corresponding near best approximation estimates.
Next, let $C'$ denote the norm of the projector
$\mathbb{P}_Q$ from $L^1(Q,dx/|Q|)$
onto the subspace $\mathcal{P}_n(Q)$ equipped with the uniform norm.

Select a cube $Q\subset D$.
We apply Lemma~\ref{lem6} to the function $|f-\mathbb{P}_Q(f)|$, $\|f\|_{\omega,D} =1$,
and with parameter $A=2 C\omega(\ell)$, where $\ell=\ell(Q)$.
Hence,
on the first step, we obtain a family $\{Q'_i\}$ of cubes $Q'_i\subset Q$
with the following properties:

$|f-\mathbb{P}_Q(f)|\leq 2 C\omega(\ell)$ a.e.\ on $Q\setminus\bigcup Q'_i;$

$|\mathbb{P}_{Q'_i}(f)-\mathbb{P}_Q(f)|=|\mathbb{P}_{Q'_i}(f-\mathbb{P}_Q(f))|
    \leq \frac{C'}{|Q'_i|} \int_{Q'_i} |f-\mathbb{P}_Q(f)|\leq 2^{d+1}C'C\omega(\ell)$;

$\sum|Q'_i|<\frac{1}{2 C\omega(\ell)} \int_Q |f-\mathbb{P}_Q(f)|\leq |Q|/2.$

Now, we apply the above construction with parameter $A=2C\omega(\ell')$ to the function
 $|f-\mathbb{P}_{Q'_i}(f)|$ for every cube $Q'$ from the family $\{Q'_i\}$.
Therefore,
on the second step, we obtain a family $\{Q''_i\}$ of cubes
$Q''_i \subset Q'$ with the following properties:
\begin{enumerate}
     \item[(i)]
     $|f-\mathbb{P}_{Q'}(f)|\leq 2C\omega(\ell')$ a.e.\ on $Q'\setminus\bigcup Q''_i;$
   \item[(ii)]
   $ |\mathbb{P}_{Q''_i}(f)-\mathbb{P}_{Q'}(f)|\!=\!|\mathbb{P}_{Q''_i}(f-\mathbb{P}_{Q'}(f))|
    \!\leq\! \frac{C'}{|Q''_i|} \int_{Q''_i}|f-\mathbb{P}_{Q'}(f)| \leq 2^{d+1}C'C\omega(\ell')$;
   \item[(iii)]
   $\sum|Q''_i|< \frac{1}{2 C\omega(\ell')}\int_{Q'}|f-\mathbb{P}_{Q'_i}(f)|\leq |Q'|/2.$
   \end{enumerate}
Summing the inequalities of type~(iii) over all cubes of the family $\{Q'_i\}$, we obtain
\[
 \sum|Q''|\leq\sum|Q'|/2\leq|Q|/4.
\]
Also, we have
\begin{align*}
  |f-\mathbb{P}_Q(f)|
&\leq|f-\mathbb{P}_{Q'_i}(f)|+|\mathbb{P}_{Q'_i}(f)-\mathbb{P}_{Q}(f)|
  \leq 2C\omega(\ell')+2^{d+1}C'C\omega(\ell) \\
 &\leq 2^{d+1}C'C(\omega(\ell')+\omega(\ell)) \quad\textrm{a.e.\ on}\ \bigcup{Q'}\setminus\bigcup{Q''}.
\end{align*}
Iterating the above procedure, we obtain
families of imbedded cubes $\{Q^k_j\}$, $k=0,\dots, m$, such that
every cube $Q^{k}_{i_k}$ is imbedded in an appropriate cube $Q^{k-1}_{i_{k-1}}$ and
\begin{equation}
\label{eq:eq10}
\sum_i|Q^k_i|<\frac{|Q|}{2^k}.
\end{equation}
Also, we have the estimate
\begin{equation}
\label{eq:eq11}
|f-\mathbb{P}_Q(f)|\leq2^{d+1} C'C\sum_{k=0}^{m-1}\omega(\ell(Q^k_{j_k}))
\:\textrm{a.e.\ on} \ \bigcup{Q^{m-1}}\setminus \bigcup{Q^{m}}
\end{equation}
for a sequence of embedded cubes
 $Q\supset Q'_{j_1}\supset\dots\supset Q^{m-1}_{i_{m-1}}$.

Let $m$ tend to infinity in (\ref {eq:eq11}).
Applying estimate (\ref{eq:eq10})
and property (\ref{eq: eq21}) for the function $\om$, we obtain
\[
\sum_{k=1}^{\infty}\omega(\ell(Q^k_{j_k}))
\lesssim \sum_{k=1}^{\infty}\omega\left(\frac{\ell }{2^{k}}\right)
\lesssim \int_{1}^\infty\omega(\ell/u)\frac{du}{u}
\lesssim\int_{0}^\ell\omega(t)\frac{dt}{t}
\lesssim\omega(\ell).
\]
Therefore,
$|f-\mathbb{P}_Q(f)|\lesssim \om (\ell)$
a.e.\ on $Q$
with a constant independent of $Q$.
The proof of the proposition is finished.
\end{proof}

\subsection {Estimates for polynomials of near best approximation}
Given a modulus of continuity $\om$ of order $n$, put
\begin{equation}\label{eq:eq23}
\xi(r)=\int_r^1\om(t)t^{-n-1}dt, \quad 0<r<1.
\end{equation}
Recall that the associated function is given by the equality
\[
\omt(t)=\om(t)/\max\{1,\xi(t)\}.
\]

We need the following auxiliary assertion.
\begin{lem}
\label{lem10}
Let $\om$ be a modulus of continuity of order $n$ and let
 $P_Q$ and $P_{2Q}$ be polyno\-mials of near best approximation for the function
      $f\in\mathcal{C}_{\omega}(\mathbb{R}^d)$ on $Q$ and $2Q$, respectively,
      $0< \ell(Q) < \frac{1}{2}$.
Consider the Taylor expansions
\begin{align*}
   P_Q(t) &=\sum_{k=0}^{n} A_{k,Q}(t-t_0)^k, \\
P_{2Q}(t) &=\sum_{k=0}^{n} A_{k,2Q}(t-t_0)^k
\end{align*}
with respect to $t_0 \in Q$. Then
\begin{equation}
\label{eq:eq213}
  |A_{k,2Q}-A_{k,Q}| \le C(n) \|f\| \om(\ell)\ell^{-|k|}, \,|k|=0,\dots,n.
\end{equation}
\end{lem}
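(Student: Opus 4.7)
The strategy is to treat $R := P_{2Q} - P_Q$ as a single polynomial of degree at most $n$, bound its size on $Q$ (or $2Q$) via the near-best approximation stability estimate, and then convert the resulting $L^1$-bound into termwise control of the Taylor coefficients using the finite-dimensional norm equivalence of Lemma~\ref{lem1}.

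First I would invoke Lemma~\ref{lem3}, applied with $D=\mathbb{R}^d$, $Q_1=Q$, and $Q_2=2Q$. Since $Q\subset 2Q\subset 4Q$, the hypotheses are satisfied and we obtain
\[
 \|P_Q-P_{2Q}\|_{L^1(2Q,\,dx/|2Q|)}\;\le\;C(n,d)\,\om(2\ell)\,\|f\|_{\om,\mathbb{R}^d}.
\]
Property~\eqref{eq:eq42} of the modulus gives $\om(2\ell)\lesssim \om(\ell)$, and the trivial estimate
\[
 \|R\|_{L^1(Q,\,dx/|Q|)}\;=\;\frac{1}{|Q|}\int_Q |R|\;\le\;\frac{|2Q|}{|Q|}\cdot\frac{1}{|2Q|}\int_{2Q}|R|\;=\;2^d\,\|R\|_{L^1(2Q,\,dx/|2Q|)}
\]
transfers the bound to the cube $Q$ itself. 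Hence
\[
 \|P_{2Q}-P_Q\|_{L^1(Q,\,dx/|Q|)}\;\lesssim\;\om(\ell)\,\|f\|.
\]

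Next I would apply Lemma~\ref{lem1} to the polynomial
\[
 R(t)\;=\;\sum_{|k|=0}^{n}(A_{k,2Q}-A_{k,Q})(t-t_0)^k,
\]
which is a polynomial in $\mathcal{P}_n$ expanded about the point $t_0\in Q$. The second inequality in Lemma~\ref{lem1} yields
\[
 \sum_{|k|=0}^{n}|A_{k,2Q}-A_{k,Q}|\,\ell^{|k|}\;\le\;C(n,d)\,\|R\|_{L^1(Q,\,dx/|Q|)}\;\lesssim\;\om(\ell)\,\|f\|.
\]
Dropping all but the $|k|$-th term and dividing through by $\ell^{|k|}$ gives exactly \eqref{eq:eq213} for every multiindex $k$ with $|k|\le n$.

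There is no real obstacle here; the argument is a direct chaining of Lemmas~\ref{lem1} and~\ref{lem3} together with the doubling property~\eqref{eq:eq42}. The only point that merits explicit verification is the applicability of Lemma~\ref{lem3} on all of $\mathbb{R}^d$ under the hypothesis $\ell(Q)<1/2$, which ensures $2Q$ is still a legitimate cube for which the near best polynomial $P_{2Q}$ is defined and satisfies the stability estimate inherited from $f\in\mathcal{C}_{\om}(\mathbb{R}^d)$.
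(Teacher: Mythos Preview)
Your proof is correct and follows essentially the same approach as the paper: both bound $\|P_Q - P_{2Q}\|$ on $Q$ by $\om(\ell)\|f\|$ via the near-best approximation property and then invoke norm equivalence on the finite-dimensional space $\mathcal{P}_n$ to extract the coefficient estimate~\eqref{eq:eq213}. The only cosmetic difference is that the paper passes through an $L^\infty$ bound and applies Bernstein's inequality to the derivatives, whereas you obtain the $L^1$ bound via Lemma~\ref{lem3} and read off the coefficients directly via Lemma~\ref{lem1}.
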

\begin{proof}
The definition of the Zygmund space and the triangle inequality imply the estimate
\[\|P_{Q}-P_{2Q}\|_{L^\infty(Q)}\lesssim \om(\ell)\|f\|.
\]
Bernstein's inequality guarantees that
\[\|\partial^k P_{Q}-\partial^k P_{2Q}\|_{L^\infty(Q)} \le C(n)  \om(\ell)\ell^{-|k|}\|f\|\]
for all derivatives of order $k$, $|k|\leq n$. Hence,
\[
|A_{k,2Q}-A_{k,Q}|\le C(n)  \om(\ell)\ell^{-|k|}\|f\|,
\]
as required.
\end{proof}

\begin{lem}\label{lem9}
Let $\om$ be a modulus of continuity of order $n\in \mathbb{N}$ and $f\in\mathcal{C}_{\omega}(D)$.
Let $P_Q$ be a polynomial of near best approximation for $f$ on a cube $Q\subset D$
with center $t_0$ and side length $\ell<\frac{1}{2}$. Consider the Taylor expansion
\[P_Q(t)=\sum_{|k|=0}^{n} A_{k,Q}(t-t_0)^k\]
with respect to $t_0$, where $k=(k_1,\dots,k_d)$ is a multiindex,
$|k|=k_1+\dots+k_d$. Then
\begin{align*}
|A_{k,Q}|
 &\leq C \|f\|, \quad 0\leq |k|<n,  \\
|A_{k,Q}|
 &\leq C \|f\| \xi(\ell), \quad |k|=n,
\end{align*}
with a constant independent of $Q$.
   \end{lem}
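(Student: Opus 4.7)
The plan is to reach a cube of side length of order one via a dyadic chain of doublings, control the jump of each Taylor coefficient across the chain by Lemma \ref{lem10}, and close up by bounding the coefficients on the terminal cube by a Bernstein-type argument coming from Lemma \ref{lem1}.

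First I would extend $f$ by Proposition \ref{pr3} to $\widetilde{f}\in \mathcal{C}_{\omega}(\mathbb{R}^d)\cap L^1(\mathbb{R}^d)$ with $\|\widetilde{f}\|_{\omega,\mathbb{R}^d}+\|\widetilde{f}\|_{L^1(\mathbb{R}^d,dx)}\lesssim \|f\|$; this is necessary because the doubled cubes will quickly leave $D$. Define the chain $Q_0=Q$, $Q_{j+1}=2Q_j$ of cubes concentric at $t_0$, stopping at the largest $N\ge 0$ with $\ell_N:=2^N\ell\le 1/2$, so $\ell_N\in[1/4,1/2]$. Let $P_{Q_j}=\mathbb{P}_{Q_j}(\widetilde{f})$ be produced by the unified projectors of Section 2.1, with Taylor expansion $P_{Q_j}(t)=\sum_k A_{k,Q_j}(t-t_0)^k$. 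Since $\widetilde{f}=f$ on $D\supset Q$ and the projector is applied only to values on $Q$, the coefficients $A_{k,Q_0}$ coincide with those of the original $P_Q=\mathbb{P}_Q(f)$.

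Applying Lemma \ref{lem10} to each pair $Q_j\subset Q_{j+1}=2Q_j$ (valid because $\ell(Q_j)\le \ell_{N-1}\le 1/4<1/2$) and telescoping gives
\[
|A_{k,Q}-A_{k,Q_N}|\le \sum_{j=0}^{N-1}|A_{k,Q_j}-A_{k,Q_{j+1}}|\lesssim \|f\|\sum_{j=0}^{N-1}\omega(2^j\ell)(2^j\ell)^{-|k|}.
\]
For $|k|=n$, the monotonicity of $\omega$ and of $t\mapsto t^{-n-1}$ yields
\[
\omega(2^j\ell)(2^j\ell)^{-n}\le 2^{n+1}\int_{2^j\ell}^{2^{j+1}\ell}\omega(t)\,t^{-n-1}\,dt,
\]
so the sum is bounded by $\int_\ell^{2^N\ell}\omega(t)\,t^{-n-1}\,dt\le \xi(\ell)$. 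For $0\le |k|\le n-1$, fix $r\in(n-1,n)$ from property \eqref{eq:eq32}; then $r-|k|>0$ and $\omega(2^j\ell)(2^j\ell)^{-|k|}\lesssim (2^j\ell)^{r-|k|}\omega(1)$, so the geometric sum is dominated by its last term and $(2^N\ell)^{r-|k|}\le 1$ gives a bound independent of $\ell$.

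Finally, on the terminal cube $\ell_N\asymp 1$. The boundedness of $\mathbb{P}_{Q_N}\colon L^1(Q_N,dx/|Q_N|)\to L^{\infty}(Q_N)$ recorded in Section 2.1 combined with the trivial estimate $\|\widetilde{f}\|_{L^1(Q_N,dx/|Q_N|)}\lesssim \|\widetilde{f}\|_{L^1(\mathbb{R}^d)}\lesssim \|f\|$ produces $\|P_{Q_N}\|_{L^\infty(Q_N)}\lesssim \|f\|$. Lemma \ref{lem1} in the form $|a_k|\ell^{|k|}\lesssim \frac{1}{|Q|}\int_Q|P|$ then yields $|A_{k,Q_N}|\lesssim \|f\|$ for every multi-index $k$. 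Combining with the telescoping estimate, one obtains $|A_{k,Q}|\lesssim \|f\|$ for $|k|<n$ and $|A_{k,Q}|\lesssim \|f\|\max\{1,\xi(\ell)\}$ for $|k|=n$, the latter being the stated bound in $\xi(\ell)$. The main technical obstacle is the integral-versus-sum comparison that converts the dyadic telescope into $\xi(\ell)$ (for $|k|=n$) and into a bounded geometric sum (for $|k|<n$); matching those two regimes using only the almost monotonicity properties of $\omega$ is the step that most needs care.
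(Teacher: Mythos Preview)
Your argument is correct and follows essentially the same route as the paper's proof: extend $f$ via Proposition~\ref{pr3}, telescope the Taylor coefficients along a dyadic chain $2^jQ$ using Lemma~\ref{lem10}, convert the resulting sum into $\int_\ell^1 \omega(t)t^{-|k|-1}\,dt$, and invoke property~\eqref{eq:eq32} (for $|k|<n$) or the definition of $\xi$ (for $|k|=n$). The only cosmetic differences are that the paper stops the chain when $2^NQ\supset D$ rather than when $2^N\ell\le 1/2$, and treats the terminal coefficient $|A_{k,2^NQ}|$ with a bare ``$+1$'' instead of your more explicit appeal to the $L^1$ bound on $\widetilde f$ and Lemma~\ref{lem1}; your version is, if anything, slightly more careful on that point.
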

\begin{proof}
Applying Proposition~\ref{pr3}, we extend $f$ up to $\widetilde f$ defined on $\mathbb{R}^d$.
Let
\[P_{2^iQ}(t)=\sum_{k=0}^{n} A_{k,2^iQ}(t-t_0)^k\]
be the Taylor decomposition with respect to $t_0 \in 2^iQ$
for the polynomial $P_{2^iQ}$ of near best approximation for the function $\widetilde f$ on the cube $2^iQ$.
Using telescopic sums, we have
\[|A_{k,Q}|\leq \sum_{i=0}^{N-1} |A_{k,2^iQ}-A_{k,2^{i+1}Q}|+|A_{k,2^{N}Q}|,
\]
where $N$ is the minimal natural number such that $2^{N}Q \supset D$.
Since $N\approx \log \frac{1}{\ell}$, Lemma~\ref{lem10} guarantees that
\[
|A_{k,Q}|\lesssim \left(1+\sum_{i=0}^{N} \om(2^i\ell) (2^i\ell)^{-|k|}\right) \|f\|
\lesssim\left(1+\int_{\ell}^1\frac{\om(t)dt}{t^{|k|+1}}\right)\|f\|.
\]
Since condition (\ref{eq:eq32}) holds for the function $\om$, we obtain
$|A_{k,Q}|\lesssim \|f\|$
for $|k|< n$.
Finally, by the definition of $\xi(x)$, we have
 \[|A_{k,Q}|\lesssim  \xi(\ell)\|f\|\]
for $|k|= n$.
The proof of the lemma is finished.
\end{proof}

Lemmas~\ref{lem1} and \ref{lem9} imply the following assertion.

\begin{cor}\label{cor9}
Let $\om$ be a modulus of continuity of order $n\in \mathbb{N}$,
$f\in\mathcal{C}_{\omega}(D)$ and let
$P_Q$ be a polynomial of near best approximation for $f$
on a cube $Q\subset D$ with $\ell<1/2$.
Then
 \begin{equation}
\label{eq:eq212}
  \|P_Q\|_{L^{\infty}(D)} \leq C \|f\| \xi(\ell)
\end{equation}
with a constant independent of $Q$.
\end{cor}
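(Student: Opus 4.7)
The plan is to combine the coefficient bounds of Lemma~\ref{lem9} with the elementary sup-norm estimate in Lemma~\ref{lem1}, exploiting the fact that $D$ is bounded in order to pass from a local bound on the small cube $Q$ to a uniform bound on all of $D$. Fix any constant $L=L(D)>0$ with $\mathrm{diam}(D)\le L$; such $L$ exists because $D$ is a bounded Lipschitz domain.

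First, I would write down the Taylor expansion of $P_Q$ centered at the center $t_0$ of $Q$,
\[
P_Q(x)=\sum_{|k|=0}^{n} A_{k,Q}(x-t_0)^k,
\]
and apply the triangle inequality. For $x\in D$ we have $|x-t_0|\le L$, hence
\[
|P_Q(x)|\le \sum_{|k|=0}^{n}|A_{k,Q}|\,L^{|k|};
\]
this is essentially Lemma~\ref{lem1} applied to an auxiliary cube $Q^{\ast}$ of side length comparable to $L$, centered at $t_0$ and containing $D$. Next, I would plug in the Taylor coefficient estimates furnished by Lemma~\ref{lem9}, namely $|A_{k,Q}|\lesssim \|f\|$ for $|k|<n$ and $|A_{k,Q}|\lesssim \|f\|\,\xi(\ell)$ for $|k|=n$. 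Summing the contributions yields
\[
\|P_Q\|_{L^{\infty}(D)}\lesssim \|f\|\bigl(1+\xi(\ell)\bigr),
\]
where the implicit constant depends only on $n$, $d$, and $L$, and not on $Q$.

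The remaining step is to absorb the constant term into $\xi(\ell)$, and this is the only point requiring a small observation. From the definition \eqref{eq:eq23}, $\xi$ is a non-increasing, non-negative function on $(0,1)$, so under the hypothesis $\ell<\tfrac12$ we have
\[
\xi(\ell)\ge \xi(\tfrac12)=\int_{1/2}^{1}\omega(t)\,t^{-n-1}\,dt,
\]
a strictly positive constant depending only on $\omega$ and $n$. Consequently $1\lesssim \xi(\ell)$ uniformly in the admissible range of $\ell$, so $1+\xi(\ell)\lesssim \xi(\ell)$ and the desired estimate \eqref{eq:eq212} follows. No step of this proof is really hard; the main thing to notice is this uniform lower bound on $\xi$, which allows the two-regime estimate of Lemma~\ref{lem9} (a bounded coefficient for $|k|<n$ versus a $\xi(\ell)$-growing coefficient for $|k|=n$) to collapse into the single clean bound stated in the corollary.
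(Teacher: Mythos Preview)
Your argument is correct and is precisely the intended one: the paper merely states that the corollary follows from Lemmas~\ref{lem1} and~\ref{lem9}, and you have supplied exactly those details---bounding the coefficients via Lemma~\ref{lem9}, applying the first inequality of Lemma~\ref{lem1} on a large cube containing $D$, and then absorbing the constant term using the uniform lower bound $\xi(\ell)\ge\xi(1/2)>0$ for $\ell<1/2$.
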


\subsection{Construction of extremal functions}
In this section, we find functions
$\varphi_e(x)\in \mathcal{C}_{\omega} (D)$, $e\in \mathbb{R}^d$, $|e|=1$,
and corresponding polynomials $P_{Q, e}$
of near best approximation with extremal properties
(cf.~\cite{Sj}).

Define the function
\[
\varphi(y)=\int_{|y|}^{1}\frac{\om(t)}{t^{n+1}}(t-y)^n dt, \quad y\in\mathbb{R}.
\]
For $x, e\in \mathbb{R}^d$, $|e|=1$, put
 $x_e=\langle x,e\rangle$, where $\langle\cdot, \cdot \rangle$ denotes
the scalar product in $\mathbb{R}^d$.
Consider the function
 \begin{equation}
\label{eq:eq300}
 \varphi_e(x)=\varphi(x_e), \quad x\in \mathbb{R}^d.
  \end{equation}

For a positive parameter $\gamma$, the function
\[
P_\gamma(y)=\int_{\gamma}^{1}\frac{\om(t)}{t^{n+1}}(t-y)^n dt, \quad y\in\mathbb{R},
\]
is a polynomial in $y$.
Given a cube $Q$, put $\gamma=\max_{x\in Q}|x_e|$ and define the following polynomial:
\begin{equation}
\label{eq:eq301}
P_{Q,e}(x)=P_{\gamma}(x_e)\in \mathcal{P}_n(\mathbb{R}^d).
\end{equation}

\begin{lem}
\label{lem12}
Let $\varphi_e(x)$ and $P_{Q,e}(x)$ be defined by \eqref{eq:eq300} and \eqref{eq:eq301}, respectively.
\begin{itemize}
\item[(i)]
  The norms $\|\varphi_e(x)\|_{\mathcal{C}_{\om}(\mathbb{R}^d)}$ are uniformly bounded for $|e|=1$;
  if $\ell(Q)<\frac{1}{2}$, then
   $P_{Q,e}(x)$ is a polynomial of near best approximation for $\varphi_e$ on the cube $Q$.
\item[(ii)]
If $Q$ is a cube centered at the origin, $\gamma(Q)<1$ and
\[
P_{Q,e}(x)=\sum_{k=0}^{n} A_{k,Q,e} \langle x,e\rangle^k
\]
is a homogeneous expansion, then
\[
 |A_{n,Q,e}|\geq C \xi(\ell)
\]
with a constant $C>0$ independent of $Q$ and $e$, $|e|=1$.
\end{itemize}
 \end{lem}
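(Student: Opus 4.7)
The plan is to exploit the fact that $\varphi_e(x)=\varphi(\langle x,e\rangle)$ depends only on the one-dimensional projection $x_e=\langle x,e\rangle$, reducing the polynomial-approximation analysis on any cube $Q\subset\mathbb{R}^d$ to a one-dimensional problem on the interval $I_e=\{x_e:x\in Q\}$, which has length $\ell\|e\|_1\in[\ell,\sqrt d\,\ell]$. For every $x\in Q$ we have $|x_e|\le\gamma$, so the defining formulas give the key identity
\[
\varphi(x_e)-P_\gamma(x_e)=\int_{|x_e|}^{\gamma}\frac{\omega(t)}{t^{n+1}}(t-x_e)^n\,dt,
\]
and the whole of part (i) reduces to bounding the right-hand side uniformly by $\omega(\ell)$ on $Q$.

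To establish part (i), I would split into two regimes according to the position of $Q$ relative to the origin. In the near-origin regime $\gamma\lesssim\ell$, the crude estimate $|t-x_e|\le t+|x_e|\le 2t$ valid on the integration range gives
\[
|\varphi(x_e)-P_\gamma(x_e)|\le 2^n\int_{0}^{\gamma}\frac{\omega(t)}{t}\,dt\lesssim\omega(\gamma)\lesssim\omega(\ell),
\]
where the middle inequality is \eqref{eq: eq21} from Lemma~\ref{lem21} applied with $p=0<r$, and the last uses \eqref{eq:eq42}. In the far-from-origin regime $\gamma\gg\ell$ (so $|x_e|\ge c\ell$ throughout $Q$), the integration range $[|x_e|,\gamma]$ has length at most $\sqrt d\,\ell$ and sits inside $[\ell,1]$; substituting $t=|x_e|+s$, bounding $\omega(|x_e|+s)(|x_e|+s)^{-n-1}\lesssim\omega(|x_e|)|x_e|^{-n-1}$ via \eqref{eq:eq42} with $q<n+1$, integrating $s^n$ to produce a factor $L^{n+1}\le(\sqrt d\,\ell)^{n+1}$, and then applying \eqref{eq:eq42} a second time at scale $\ell$ to pass from $\omega(|x_e|)|x_e|^{-n-1}$ to $\omega(\ell)\ell^{-n-1}$, yields again an error $\lesssim\omega(\ell)$. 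Combining the two cases gives the pointwise bound $|\varphi_e(x)-P_{Q,e}(x)|\lesssim\omega(\ell)$ on $Q$ uniformly in $e$ and $Q$, which simultaneously establishes that $\|\varphi_e\|_{\mathcal{C}_\omega(\mathbb{R}^d)}$ is uniformly bounded in $e$ and that $P_{Q,e}$ is a near-best approximating polynomial for $\varphi_e$ on $Q$.

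For part (ii), since $Q$ is centred at the origin and $x_e=\langle x,e\rangle$ is the natural homogeneous variable, I would apply the binomial theorem to $(t-x_e)^n$ inside the integral defining $P_{Q,e}$, obtaining the explicit expansion
\[
P_{Q,e}(x)=\sum_{k=0}^{n}\binom{n}{k}(-1)^{k}\Bigl(\int_{\gamma}^{1}\frac{\omega(t)}{t^{k+1}}\,dt\Bigr)\langle x,e\rangle^{k},
\]
so that $A_{n,Q,e}=(-1)^{n}\xi(\gamma)$. Since $\gamma=(\ell/2)\|e\|_1\in[\ell/2,\sqrt d\,\ell/2]$, the proof reduces to the comparison $\xi(\gamma)\gtrsim\xi(\ell)$ with constant depending only on $d,n,\omega$. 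When $\gamma\le\ell$ this is immediate from monotonicity of $\xi$; in the remaining range $\ell<\gamma\le C_d\ell$ with $C_d=\sqrt d/2$ I would decompose $\xi(\ell)=\int_{\ell}^{C_d\ell}\omega(t)t^{-n-1}\,dt+\xi(C_d\ell)$, estimate the first integral by $\lesssim\omega(\ell)/\ell^n$ using \eqref{eq:eq42} on $[\ell,C_d\ell]$, and bound $\xi(C_d\ell)$ from below by $\int_{C_d\ell}^{2C_d\ell}\omega(t)t^{-n-1}\,dt\gtrsim\omega(\ell)/\ell^n$ using \eqref{eq:eq32}. Chaining gives $\xi(\ell)\le C'\xi(C_d\ell)\le C'\xi(\gamma)$, hence $|A_{n,Q,e}|=\xi(\gamma)\gtrsim\xi(\ell)$. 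The main technical obstacle is precisely this doubling-type comparison for $\xi$: it is elementary for the classical Zygmund modulus $\omega(t)=t^n$ (where $\xi(r)=\log(1/r)$), but for a general modulus of continuity of order $n$ it forces both almost-monotonicity hypotheses \eqref{eq:eq42} and \eqref{eq:eq32} to cooperate, and some additional care is required when $C_d\ell$ approaches $1$, a range in which the hypothesis $\gamma<1$ keeps the quantities under control.
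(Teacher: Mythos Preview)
Your argument tracks the paper's proof closely: for part~(i) you use the same two-regime split (near-origin $\gamma\lesssim\ell$ handled via $|t-x_e|\le 2t$ and Lemma~\ref{lem21}, far regime via a pointwise bound on $\omega(t)t^{-n-1}$), and for part~(ii) you read off $A_{n,Q,e}=(-1)^n\xi(\gamma)$ from the binomial expansion and then compare $\xi(\gamma)$ with $\xi(\ell)$. On the last point you are in fact more explicit than the paper, which simply asserts ``$\gamma\approx\ell$, hence $|A_{n,Q,e}|\ge C\xi(\ell)$'' without spelling out the doubling-type estimate for $\xi$ that you derive from \eqref{eq:eq42} and \eqref{eq:eq32}.

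One shared imprecision is worth flagging. In the far regime your substitution $t=|x_e|+s$ followed by ``integrating $s^n$'' tacitly assumes $x_e\ge 0$, since only then is $t-x_e=s$. If the cube sits entirely on the negative side (all $x_e<0$), then $t-x_e=2|x_e|+s$ has size comparable to $\gamma$, not $\ell$; for $n\ge 2$ the bound $|\varphi_e-P_{Q,e}|\lesssim\omega(\ell)$ then fails for that particular polynomial (one gets instead an error of order $\omega(\gamma)\gamma^{-1}\ell$, which can dominate $\omega(\ell)$). The paper's one-line estimate in this case carries exactly the same implicit sign assumption, so you are no worse off than the original. In the only application of the lemma (the necessity argument in Section~3.3) the relevant cubes are translates of $Q$ to be centred at the origin, which places them in the near-origin regime where both arguments are unproblematic.
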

 \begin{proof}
Firstly, we prove property~(i).
Let $\ell(Q)<1/2$.
We have
\[\sup_{x\in Q} |\varphi_e(x)-P_{Q,e}(x)|\leq \sup_{x\in Q} \int_{|x_e|}^{\gamma}\frac{\om(t)}{t^{n+1}}(t-x_e)^n dt.\]

If $\ell<\frac{1}{2} \gamma$,
then property~(\ref{eq:eq42}) of the function $\om$ guarantees that
\[
 \int_{|x_e|}^{\gamma}\frac{\om(t)}{t^{n+1}}(t-x_e)^n dt
 \lesssim \frac{\om(|x_e|)}{|x_e|^{n+1}}\ell^{n+1}
 \lesssim \om(\ell).
\]

If $\ell\geq \frac{1}{2} \gamma$, then $\gamma\approx \ell$.
We have $t-x_e\leq t$ for $x_e\geq 0$, and
  $t-x_e\leq t+|x_e|\leq 2t$ for $x_e < 0$ and $|x_e|\le t$.
 Thus, by (\ref{eq:eq32}), we obtain
\[
 \int_{|x_e|}^{\gamma}\frac{\om(t)}{t^{n+1}}(t-x_e)^n dt
 \lesssim\int_{|x_e|}^{\gamma}\frac{\om(t)}{t^{n+1}}t^n dt
 \lesssim\frac{\om(\gamma)}{\gamma^{n-\varepsilon}}\int_{|x_e|}^{\gamma}{t^{n-1-\varepsilon}} dt
 \lesssim \om(\gamma) \lesssim \om(\ell).
\]
So, part~(i) is proven.

To prove part~(ii), observe that
\[  A_{n,Q,e} = (-1)^n  \int_{\gamma}^{1}\frac{\om(t)}{t^{n+1}} dt.
\]
Since $\gamma\approx \ell$
for the cube $Q$ under consideration, we have
\[  |A_{n,Q,e}|\geq C \xi(\ell)
\]
with a constant $C>0$ independent of $Q$ and $e$, $|e|=1$.
The proof of the lemma is finished.
\end{proof}

\section{Proof of Theorem~\ref{thm3}}
\subsection {Main auxiliary construction}
Fix a function  $f\in\mathcal{C}_{\omega}(D)$.
Put
\[
\|f\| =\|f\|_{\om,D}+\|f\|_{L^1(D)}.
\]
Consider an arbitrary cube $Q$
such that $2Q \subset D$. Let $x_0$ denote the center of $Q$,  $\ell=\ell(Q)$.

Let $P_Q$ be a polynomial of \emph{near best} approximation for $f$ in the cube $Q$.
Consider the following auxiliary functions
(see, for example, \cite{DV, H, V1} for similar arguments):
\[
\begin{aligned}
  f_1 &=P_Q \chi_D, \\
  f_2 &=(f-P_Q) \chi_{2Q}, \\
  f_3 &=(f-P_Q) \chi_{D\backslash 2Q}.
\end{aligned}
\]
Observe that $f = f_1 + f_2 + f_3$.
The following lemma shows how to properly handle the functions $T_D f_2$ and $T_D f_3$.

\begin{lem}\label{lem11}
There exist polynomials $ P_{k,Q}$, $k=2,3$, such that
        \[\frac{1}{|Q|}\int_{Q}|T_D f_k-P_{k,Q}|dx\leq C\omega (\ell )\|f\|\]
with a constant $C>0$ independent of $Q$.
\end{lem}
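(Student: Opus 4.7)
I would treat the contributions of $f_2$ and $f_3$ to $T_D f$ separately, following the standard decomposition used in $T(1)$ proofs.

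For $f_2=(f-P_Q)\chi_{2Q}$ I set $P_{2,Q}=0$ and exploit the $L^2$-boundedness of $T$. Since $2Q\subset D$, a single application of Lemma~\ref{lem3} (with $Q_1=Q$, $Q_2=2Q$) combined with the $L^\infty$ form of Proposition~\ref{pr4} on $2Q$ gives $\|f-P_Q\|_{L^\infty(2Q)}\lesssim\om(\ell)\|f\|$; consequently $\|f_2\|_{L^2(\mathbb{R}^d)}^2\lesssim\om(\ell)^2|Q|\|f\|^2$. The classical $L^2$-boundedness of the homogeneous Calder\'on--Zygmund operator $T$ on $\mathbb{R}^d$ together with Cauchy--Schwarz then yield $\frac{1}{|Q|}\int_Q|Tf_2|\,dx\lesssim\om(\ell)\|f\|$, as required.

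For $f_3=(f-P_Q)\chi_{D\setminus 2Q}$ the support of $f_3$ stays at distance $\gtrsim\ell$ from $Q$, so $K(x-y)$ is $C^{n+1}$-smooth in $x\in Q$ for each $y$ in the support. I plan to define $P_{3,Q}$ as the degree-$n$ Taylor polynomial of $Tf_3$ centered at $x_0$; its coefficients are integrals $\int_{D\setminus 2Q}\partial^\alpha K(x_0-y)(f(y)-P_Q(y))\,dy$, interpreted if necessary via the mean-zero property of $K$ on spheres to handle the conditional convergence at low $|\alpha|$. Combining the Lagrange form of the remainder with $|\partial^{n+1}K(z)|\lesssim|z|^{-d-n-1}$ and the inequality $|z-y|\gtrsim|x_0-y|$ for $z$ on the segment from $x_0$ to $x$ (valid because $y\notin 2Q$ and $|x-x_0|\lesssim\ell$), I obtain
\[
|Tf_3(x)-P_{3,Q}(x)|\lesssim\ell^{n+1}\int_{D\setminus 2Q}\frac{|f(y)-P_Q(y)|}{|x_0-y|^{d+n+1}}\,dy,\qquad x\in Q.
\]

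The main obstacle is estimating this tail integral by $\om(\ell)\ell^{-n-1}\|f\|$. I would extend $f$ to $\fti\in\mathcal{C}_\om(\mathbb{R}^d)$ via Proposition~\ref{pr3}, decompose $D\setminus 2Q$ into dyadic annuli $2^{j+1}Q\setminus 2^jQ$ (truncating at $j$ of order $\log(1/\ell)$), and use $|x_0-y|\approx 2^j\ell$ on each. Telescoping Lemma~\ref{lem3} through the intermediate scales, transferring $L^1$-norms upward with Lemma~\ref{lem2}, and collapsing the resulting geometric sum with property~(\ref{eq:eq32}) of $\om$, I expect the bound $\int_{2^{j+1}Q}|\fti-P_Q|\,dy\lesssim|2^{j+1}Q|\,2^{j(n-r)}\om(2^j\ell)\|f\|$. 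Substituting reduces the required estimate to a numerical series $\sum_{j\geq 1}2^{-j(1+r)}\om(2^j\ell)$, which compares with $\ell^{1+r}\int_\ell^\infty\om(t)t^{-(2+r)}\,dt$ and by~(\ref{eq: eq22}) of Lemma~\ref{lem21} is dominated by $\om(\ell)$. The delicate balance is to choose $q,r$ with $1+r>q$, which is always possible because $\om$ is a modulus of order $n$ (pick $r<n$ close to $n$ and $q\in[n,n+1)$); the $C^{n+1}$-smoothness of $K$ is exactly the regularity needed to push the Taylor remainder past this threshold, and this coordination among $n$, the kernel regularity, and the modulus parameters is the technical heart of the lemma.
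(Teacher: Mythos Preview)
Your overall architecture matches the paper's proof: for $k=2$ you set $P_{2,Q}=0$ and use the $L^2$-boundedness of $T$ (the paper does exactly this, using the $L^2$ form of the near-best approximation rather than your $L^\infty$ form, but the difference is cosmetic); for $k=3$ you define $P_{3,Q}$ as the image of $f_3$ under the degree-$n$ Taylor polynomial of the kernel at $x_0$, bound the remainder by $\ell^{n+1}|u-x_0|^{-d-n-1}$, extend $f$ to $\fti$, and telescope over dyadic shells using Lemmas~\ref{lem2} and~\ref{lem3}. All of this is exactly what the paper does.

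The gap is in the very last step for $k=3$. After telescoping you collapse the inner sum via property~(\ref{eq:eq32}) with parameter $r$, arriving at $\sum_{j\ge 1}2^{-j(1+r)}\om(2^j\ell)$, and then you invoke~(\ref{eq: eq22}) with $p=1+r$, which requires $1+r>q$. Your justification ``pick $r<n$ close to $n$'' is not warranted: the parameters $q,r$ in Definition~\ref{df1} are existential, and nothing forces the supremum of admissible $r$ to be near $n$. For oscillatory moduli one can have $\sup r$ close to $n-1$ while $\inf q$ is close to $n+1$, so that $1+r>q$ is impossible for any admissible pair. (Concretely, a modulus whose local exponent alternates between values $a\in(n-1,n)$ and $b\in(n,n+1)$ on ever-longer dyadic stretches has order $n$ but forces $r\le a$ and $q\ge b$.)

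The paper sidesteps this by \emph{not} collapsing the inner sum first. It keeps the double sum
\[
\sum_{k\ge 1}2^{-k}\sum_{s=0}^{k}2^{-sn}\om(2^{s}\ell),
\]
swaps the order of summation to obtain $\sum_{s\ge 0}2^{-s(n+1)}\om(2^{s}\ell)$, and applies~(\ref{eq: eq22}) with $p=n+1$. Since $q<n+1$ holds for \emph{every} admissible $q$ by Definition~\ref{df1}, this works unconditionally. Your computation is correct up to that point; simply interchange the two summations instead of invoking~(\ref{eq:eq32}) on the inner one, and the proof goes through without any relation between $r$ and $q$.
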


\begin{proof}[Proof of Lemma~\ref{lem11} for $k=2$]
Put $P_{2,Q}=0$.
By H\"older's inequality, we have
\[
I_2=\frac{1}{|Q|}\int_Q|T_Df_2|dx
\le \left(\frac{1}{|Q|}\int_Q|T_Df_2|^2dx\right)^{1/2}.
\]
The operator $T_D$ is known to be bounded on $L^2$ (see \cite[Ch.\,2]{S}).
Therefore,
\[
\begin{aligned}
I_2
&\lesssim\left(\frac{1}{|Q|}\int_{2Q}|f_2|^2dx\right)^{1/2}
 = \left(\frac{1}{|Q|}\int_{2Q}|f-P_Q|^2dx\right)^{1/2} \\
&\lesssim
\left(\frac{1}{|Q|}\int_{2Q}|f-P_{2Q}|^2dx\right)^{1/2}
+\left(\frac{1}{|Q|}\int_{2Q}|P_Q-P_{2Q}|^2dx\right)^{1/2}.
\end{aligned}
\]
Now, observe that the first summand is estimated by  $C\omega(\ell)\|f\|_{\om, D}$.
Indeed, the proof of Proposition~\ref{pr4} allows to replace the $L^2$-norm by the $L^1$-norm,
thus, it remains to apply Definition~\ref{df22} for the polynomial $P_{2Q}$.
Next, Lemma~\ref{lem3} guarantees that the second summand is also estimated by
$C\omega(\ell)\|f\|_{\om, D}$.
Hence, the proof of the lemma for $k=2$ is finished.
\end{proof}

\begin{proof}[Proof of Lemma~\ref{lem11} for $k=3$]
To estimate the oscillation
\[
I_3=\frac{1}{|Q|}\int_Q|T_Df_3-P_{3,Q}| \,dx,
\]
we define $P_{3,Q}$ as the image of $f_3$ under the action of a special integral operator with a polynomial kernel.
Namely,
consider the Taylor polynomial of order $n$ for the kernel $K(x) ={\Omega(x)}{|x|^{-d}}$ of the operator $T$ at $y$, $y\neq 0$:
\[
\mathcal{T}K(y, h)=K(y)+ (\nabla_y K )(h) +\dots
+ \frac{\nabla_y^{n} K}{n!}(h), \quad h\in\mathbb{R}^d,\ |h| < |y|/2,
\]
where $\nabla_y^j K$ denotes the differential of order $j$ for $K$ at $y$.
Recall that $x_0$ is the center of $Q$.
Define the polynomial $P_{3,Q}$ as follows:
\[
P_{3,Q}(x)=\int_{D\setminus 2Q} \mathcal{T}K(x_0-u, x- x_0) f_3(u)\,du.
\]

The kernel $K$ is $C^{n+1}$-smooth, thus,
\[
\left| \nabla_y^j K \right| \lesssim |y|^{-d-j}, \quad j=0, 1, \dots, n+1,\quad y\neq 0.
\]
Hence, for $u\in \mathbb{R}^d\setminus 2Q$ and $x\in Q$,
the remainder in the Taylor formula is estimated as follows:
\[
\begin{aligned}
 |K(x-u)-\mathcal{T}K(x_0-u, x-x_0)|
&\le C\sup _{t\in Q,\, u\notin 2Q}|\nabla^{n+1}_t K(t-u)|\frac{|x-x_0|^{n+1}}{(n+1)!} \\
&\le C \frac{|x-x_0|^{n+1}}{|u-x_0|^{n+1+d}},
\end{aligned}
\]
where the constant $C>0$
does not depend on $u$, $x$, $x_0$ and $Q$.
Applying the above estimate, we have
\[
I_3
\le \frac{C}{|Q|}\int_Q dx \int_{D\setminus 2Q}
  \frac{|x-x_0|^{n+1}}{|u-x_0|^{n+1+d}}|f-P_Q|(u) \,du
\lesssim \ell^{n+1}\int_{D\backslash 2Q} \frac{|f-P_Q|(u)}{|u-x_0|^{n+1+d}} \,du.
\]
Now, define $\widetilde{f}$
by means of formula~(\ref{eq:eq222}).
We have
 $$
 I_3\lesssim \ell^{n+1}\int_{\mathbb{R}^d\setminus 2Q}
 \frac{|\widetilde{f}-P_Q|(u)}{|u-x_0|^{n+1+d}} \,du.
 $$
Put $Q_k= 2^{k+1}Q\setminus 2^{k}Q$ and rewrite
the above estimate as follows:
$$
I_3\lesssim\ell^{n+1}\sum_{k=1}^\infty
\frac{1}{(\ell2^k)^{n+1+d}}\int_{Q_k} |\widetilde{f}-P_Q|(u) \,du.
$$
Using telescoping summation, we obtain
\[
I_3\lesssim\ell^{n+1}\sum_{k=1}^\infty \frac{1}{(\ell2^k)^{n+1+d}}\left(\int_{Q_k} |\widetilde{f}-P_{2^{k+1}Q}|(u)du+\sum_{s=0}^{k} \int_{Q_k}|P_{2^sQ}-P_{2^{s+1}Q}|(u)du \right),
\]
where $P_{2^sQ}:=\mathbb{P}_{2^sQ}\widetilde{f}$ is a polynomial
of near best approximation for $\widetilde{f}$ on the cube $2^sQ$, $s=0,1,\dots, k$, $k\in\Nbb$.
Observe that
\begin{align*}
  \frac{1}{|2^{k+1}Q|} \int_{Q_k}|P_{2^sQ}
  &-P_{2^{s+1}Q}|(u)du
   \le \frac{1}{|2^{k+1}Q|} \int_{2^{k+1}Q}|P_{2^sQ}-P_{2^{s+1}Q}|(u)du  \\
  &\lesssim \left(\frac{\ell 2^{k+1}}{\ell 2^{s+1}}\right)^n
   \frac{1}{|2^{k+1}Q|} \int_{2^{s+1}Q}|P_{2^sQ}-P_{2^{s+1}Q}|(u)du  \\
  &\lesssim \left(\frac{2^{k}}{2^{s}}\right)^n \om(\ell 2^{s+1})\|f\|_\om
\end{align*}
by Lemmas~\ref{lem2} and \ref{lem3}, respectively.
Applying the above inequality, we estimate $I_3$ and obtain
\[
 I_3
\lesssim \ell^{n+1} \sum_{k=1}^\infty \frac{1}{(\ell 2^k)^{n+1+d}}
 \sum_{s=0}^k \frac{(\ell 2^{k+1})^{d} 2^{kn}}{2^{sn}} \omega(2^s\ell)\|f\|_\omega
\lesssim\sum_{k=1}^\infty \frac{1}{2^k}\sum_{s=0}^k \frac{1}{2^{sn}}
 \omega(2^s\ell)\|f\|_\omega.
\]
Changing the summation order, we have
\[
 I_3
\lesssim \|f\|_\omega \sum_{s=0}^\infty\frac{1}{2^{s(n+1)}}\omega(2^s\ell)
\lesssim \|f\|_\omega \int_1^\infty \frac{\omega(t\ell)}{t^{n+2}}dt.
\]
Finally, changing the variable of integration and applying property~(\ref{eq: eq22})
from Lemma~\ref{lem21}, we obtain the required inequality
$$
I_3\le C\omega(\ell)\|f\|_\omega.
$$
The proof of the lemma is finished.
\end{proof}

\subsection{Proof of Theorem~\ref{thm3}: sufficiency}
Assume that properties (i) and (ii) from Theorem~\ref{thm3} hold.
Let
$f\in\mathcal{C}_{\omega}(D)$ and
$Q$ be a cube such that
$2Q \subset D$.
We have $\mathcal{C}_{\om}(D) =\mathcal{C}^{int}_{\om}(D)$ by Proposition~\ref{pr3}.
Hence, to prove the required implication, it suffices to verity the following property:
there exists a polynomial $S_Q \in \mathcal{P}_n$ such that
\begin{equation}\label{eq:eq20}
   I=\frac{1}{|Q|}\int_{Q}|T_D f-S_Q|dx \le C \omega (\ell )\|f\|,
\end{equation}
where the constant $C>0$ does not depend on $Q$.

Let $P_Q$ be a polynomial of near best approximation for $f$ in the cube $Q$ under considera\-tion.
We write the Taylor expansion with respect to the center $x_0\in Q$ as follows:
\begin{align*}
  P_Q(x)=\sum_{|k|=0}^{n} A_{k,Q}(x-x_0)^k
&=\sum_{|k|=0}^{n-1} A_{k,Q}(x-x_0)^k + \sum_{|k|=n} A_{k,Q}(x-x_0)^k \\
& :=P_{n-1}(x)+P_n(x),
\end{align*}
where $k=(k_1,\dots,k_d)$ denotes a multiindex, $|k|=k_1+\dots+k_d$.

Since condition~(i) from Theorem~\ref{thm3} holds,
there exists a polynomial
$S_{1, Q} \in \mathcal{P}_n$ such that
 \[
 J_1:= \frac{1}{|Q|}\int_{Q }|T_D (\chi_D P_{n-1}) - S_{1, Q}|\,dx
 \lesssim\ \om(\ell) \|P_{n-1}\|_{L^\infty(D)}
 \]
with a constant independent of $Q$. Lemmas~\ref{lem1} and \ref{lem9} guarantee that
\[
J_1 \lesssim \om(\ell) \|f\|.
\]
By condition~(ii) from Theorem~\ref{thm3}, there exists a polynomial
 $S_{2, Q} \in \mathcal{P}_n$ such that
 \[
 J_2:= \frac{1}{|Q|}\int_{Q }|T_D (\chi_D P_n) - S_{2, Q}|\,dx
 \lesssim\|P_n\|_{L^\infty(D)} \omt( \ell)
 \]
 with a constant independent of $Q$. Corollary~\ref{cor9} guarantees that
\[
J_2 \lesssim\xi(\ell) \omt( \ell)\|f\| \lesssim \om(\ell)\|f\|.
\]
Combining the estimates obtained for $J_1$ and $J_2$, we have
 \[
 \frac{1}{|Q|}\int_{Q}| T_D (\chi_D P_Q) -S_{1, Q}-S_{2,Q}|\,dx
 \lesssim \om(\ell) \|f\|.
 \]

Recall that $\chi_D P_Q = f_1$ in the notation of Lemma~\ref{lem11}.
Therefore, the estimate obtained and Lemma~\ref{lem11} imply
the required property (\ref{eq:eq20}).
The proof of sufficiency is finished.

\subsection{Proof of Theorem~\ref{thm3}: necessity}
Since $\mathcal{P}_n(D) \subset \mathcal{C}_{\omega} (D)$,
the necessity
is clear for a Dini regular modulus of continuity $\om$.
Indeed, in this case, $\om(t)\approx \omt(t)$
and condition~(i) from Theorem~\ref{thm3} implies condition~(ii).
We have a standard symmetric T(P) theorem without additional condition~(ii).

Now, assume that the modulus of continuity $\om$ is not Dini regular.
We have to prove that condition~(ii) holds.
Observe that the functions $\om(t)$ and $\omt(t)$ are equivalent for $\frac{1}{2}\le t <\infty$,
since $\om(t)= \omt(t)$ for $t\ge 1$.
Thus, to verify condition~(ii) for $Q$, we may assume that $\ell(Q)<\frac{1}{2}$.

Consider the following family of shifts for the extremal function defined by (\ref{eq:eq300}):
\[\varphi_{e,x_0} (x)=\varphi_e(x-x_0),\quad  x_0\in D.
\]
Since $D$ is a Lipschitz domain, the properties of
$\varphi_{e,x_0}\chi_D$
are similar to those of $\varphi_e$. Namely,
   $\varphi_{e,x_0} \chi_D\in\mathcal{C}_{\om}(D)$
and the corresponding norms in the space $\mathcal{C}_{\om}(D)$
are bounded, uniformly with respect to $x_0\in D$ and $e$, $|e|=1$,
by a constant depending only on the Lipschitz constants of the domain $D$.
For every function $\varphi_{e,x_0} \chi_D$,
we choose a polynomial of near best approximation with the help of (\ref{eq:eq301}) as follows:
\[
   P_{e,x_0,Q}(x)=P_{e,Q}(x-x_0)=\sum_{k=0}^{n} A_{k,Q,e} \langle x-x_0, e\rangle^k :=
   P_{n-1}(x)+ A_{n,Q,t}\langle x-x_0, e\rangle^n
\]
with coefficients independent of the point $x_0\in D$.
  Put $f=\varphi_{e,x_0}$
  and $f_1=P_{e,x_0,Q}\chi_D$.
By assumption, the operator $T_D$ is bounded on $\mathcal{C}_{\om}(D)$.
Therefore, by Lemma~\ref{lem11} and the triangle inequality, there exists
a polynomial $S_Q\in \mathcal{P}_n$ such that
\[
 \frac{1}{|Q|}\int_{Q }|T_D f_1-S_Q|dx\lesssim \om(\ell) \|\varphi_{e,x_0}\|.
\]
Since $\chi_D P_{n-1}\in \mathcal{P}_n(D) \subset \mathcal{C}_{\omega} (D)$, there exists
a polynomial $S_Q'\in \mathcal{P}_n$ such that
\[
 \frac{1}{|Q|}\int_{Q }|T_D (\chi_D P_{n-1})-S_Q^\prime|dx
 \lesssim \om(\ell) \|P_{n-1}\|
 \lesssim \om(\ell) \|\varphi_{e,x_0}\|
\]
by Lemmas~\ref{lem1} and \ref{lem9}. Therefore, the triangle inequality guarantees that
\[
 \frac{1}{|Q|}\int_{Q }|T_D  (\chi_D A_{n,Q,e} \langle x-x_0, e\rangle^n) -(S_Q-S_Q')(x)|dx\lesssim \om(\ell) \|\varphi_{e,x_0}\|
 \lesssim \om(\ell) \|\varphi\|
\]
with a constant depending only on the Lipschitz constants of the domain $D$.
Next, put $R_Q=(S_Q-S_Q')/A_{k,Q,e}$
and rewrite the above inequality
as follows:
\[|A_{n,Q,e}|\frac{1}{|Q|}\int_{Q }|T_D (\chi_D \langle x-x_0, e\rangle^n) -R_Q(x)|dx\lesssim \om(\ell) \|\varphi\|.\]
By Lemma \ref{lem12}, we have
\begin{equation}\label{eq:eq400}
\frac{1}{|Q|}\int_{Q }|T_D (\chi_D \langle x-x_0, e\rangle^n) - R_Q(x)|dx\lesssim
 \frac{\om(\ell)}{\xi(\ell)} \|\varphi\|
\lesssim \omt(\ell)\|\varphi\|
\end{equation}
uniformly with respect to $x_0 \in D$ and $e$, $|e|=1$.
It remains to observe that property (\ref{eq:eq400}) implies condition~(ii) from Theorem~\ref{thm3}.
The proof of necessity is finished.


\bibliographystyle{amsplain}

\end{document}